\definecolor{darkblue}{RGB}{0,0,160}
\theoremstyle{plain}
\newtheorem{thm}{Theorem}[section]
\newtheorem{lemma}[thm]{Lemma}
\newtheorem{prop}[thm]{Proposition}
\newtheorem{cor}[thm]{Corollary}
\newtheorem*{thm*}{Theorem}
\newtheorem*{lemma*}{Lemma}
\newtheorem*{prop*}{Proposition}
\newtheorem*{cor*}{Corollary}
\newtheorem*{conj*}{Conjecture}
\theoremstyle{definition}
\newtheorem{defn}[thm]{Definition}
\newtheorem{ex}[thm]{Example}
\theoremstyle{remark}
\newtheorem*{rmk}{Remark}
\newcommand{\zz}{\mathbb{Z}}
\newcommand{\nn}{\mathbb{N}}
\newcommand{\rr}{\mathbb{R}}
\newcommand{\cc}{\mathbb{C}}
\newcommand{\kk}{\mathbb{K}}
\newcommand{\calc}{\mathcal{C}}
\DeclareMathOperator{\supp}{supp}
\newcommand{\cone}{\mathrm{cone}}
\newcommand{\ind}{\mbox{$\perp \kern-5.5pt \perp$}}
\newcommand{\pw}{\mathrm{pw}}
\newcommand{\gl}{\mathrm{gl}}
\newcommand\set[1]{\left\{\,#1\,\right\}}  
\title{Lattice supported distributions and graphical models}
\author{Thomas Kahle \and Seth Sullivant}
\date{\today}
\begin{document}

\begin{abstract}
\noindent
For the distributions of finitely many binary random variables, we
study the interaction of restrictions of the supports with conditional
independence constraints.  We prove a generalization 
of the Hammersley-Clifford theorem for distributions whose
support is a natural distributive lattice:   that is,
any distribution which has natural lattice support and satisfies
the pairwise Markov statements of a graph  must factor according
to the graph.  We also show a connection to the Hibi ideals of lattices.
\end{abstract}

\maketitle

\section{Introduction}

Consider $m$ binary random variables $X_1, \ldots, X_m$.  Throughout
we write $[m] = \{1, 2, \ldots, m\}$.  We denote joint probabilities
as
\[
P(X_1 = i_1, \ldots, X_m = i_m)   =  p_S
\]
where $S = \{ j \in [m] : i_j = 1 \}$.  Hence a joint distribution of
$X = (X_{1},\dotsc, X_{m})$ is a vector $p \in \rr^{2^{[m]}}$, of
nonnegative numbers summing to~$1$, with one entry for each
$S \subseteq [m]$.  The \emph{support} of a probability distribution
$p$ is the set $\supp(p) = \{ S \in 2^{[m]} : p_S > 0 \}$.
Let $G = ([m], E)$ be an (undirected) graph with vertex set $[m]$, and
edge set $E \subseteq \binom{[m]}{2}$.  Associated to such a graph is
a family of probability distributions called a \emph{graphical model}.
Let $\calc(G)$ denote the set of all cliques of $G$ (including the
empty set) and $\calc(G)^{\max}$ denote the set of maximal cliques
of~$G$.  For each $S \in \calc(G)^{\max}$ we introduce a set of
parameters $(a^S_T : T \subseteq S)$.  The graphical model associated
to $G$ consists of all probability distributions for which there are
real parameters such that
\begin{equation} \label{eq:factor}
  p_T = \frac{1}{Z} \prod_{S \in\calc(G)^{\max}}
  a^S_{S \cap T}, \qquad T\subset [m]
\end{equation}
where $Z$ is a normalizing constant.

\begin{defn}
  A distribution \emph{factors according to the graph $G$} if it has a
  factorization~\eqref{eq:factor}.
\end{defn}

The theory of graphical models is closely connected to conditional
independence (CI).  The basic idea is that edges in the graph
represent interaction or dependence among the variables, while being
disconnected in the graph yields independence.  Various sets of CI
statements can be derived from an undirected graph~$G$.  These are
determined by separations of vertex sets.  To this end, let $A$, $B$,
and $C \subseteq [m]$ be disjoint vertex sets of~$G$.  Then $C$
\emph{separates} $A$ and~$B$ if any path from a vertex in $A$ to a
vertex in $B$ passes through a vertex in~$C$.

\begin{defn}
  Let $G = ([m], E)$ be a simple undirected graph.
  \begin{enumerate}
  \item The \emph{pairwise Markov statements} of $G$ are all the
    conditional independence statements
    $X_i \ind X_j | X_{[m] \setminus \{i, j \} }$ for all non edges
    $i,j$ of the graph.  They are denoted ${\rm pw}(G)$.
  \item The \emph{global Markov statements} of $G$ are all the
    conditional independence statements $X_A \ind X_B | X_C$ such that
    $C$ separates $A$ and $B$ in the graph.  They are denoted
    ${\rm gl}(G)$.
    \end{enumerate}
\end{defn}

Well-known facts about the relation between factorization and Markov
statements include:

\begin{thm}\label{thm:graphicalmodelsummary}
  Let $G = ([m], E)$ be a graph and $p \in \rr^{2^{[m]}}$ a
  probability distribution.
  \begin{enumerate}
  \item\label{it:fac=>global} If $p$ factors according to $G$, then
    $p$ satisfies the global Markov statements of~$G$.
  \item\label{it:global=>pw} If $p$ satisfies the global Markov
    statements of $G$, then it satisfies the pairwise Markov
    statements of~$G$.
  \item\label{it:posPW=>factor} (Hammersley-Clifford Theorem) If $p$
    is positive (that is, $\supp(p) = 2^{[m]}$), then if $p$ satisfies
    the pairwise Markov statements of $G$, it also factorizes
    according to~$G$.
  \item If $G$ is a chordal graph and $p$ satisfies the global Markov
    statements of $G$, then $p$ factorizes according to~$G$.
  \item If $G$ is not a chordal graph, there are distributions $p$
    that satisfy the global Markov statements of $G$ but do not
    factorize according to~$G$.
  \item\label{it:notChordal} If $G$ is not a chordal graph, there
    are distributions $p$ that are limits of factorizing distributions
    but do not factorize according to~$G$.
  \end{enumerate}
\end{thm}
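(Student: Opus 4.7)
The plan is to exhibit a sequence of $G$-factorizing distributions converging to a distribution that does not factor. Since $G$ is non-chordal, it contains an induced cycle $C_n$ with $n \geq 4$ on some vertex subset $V \subseteq [m]$. I would first produce the example on $C_n$ itself and then extend to $G$ by tensoring with the product $u$ of independent uniform Bernoulli distributions on $[m]\setminus V$ (in the approximating sequence this $u$-factor is absorbed into trivially constant potentials for cliques meeting $[m]\setminus V$). Since marginalization does not in general preserve factorization on induced subgraphs, the non-factorization of the extension is verified by a direct combinatorial argument on the edge-potential product structure, not by a simple marginal reduction.

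\emph{Construction on $C_n$.} On $C_n$ I would construct a one-parameter family of positive edge potentials $\phi_e^{(\epsilon)} \colon \{0,1\}^e \to \rr_{>0}$ whose factorizing distributions $p^{(\epsilon)}$ converge as $\epsilon \to 0$ to a boundary distribution $p^*$ with proper support $L = \supp(p^*) \subsetneq 2^{[n]}$. Certain entries of the $\phi_e^{(\epsilon)}$ are driven to zero at coordinated rates so that the limit is a well-defined probability distribution. In particular, $p^*$ automatically satisfies every polynomial equation valid on the positive part of the parametric image (including the pairwise Markov equations, by parts (1) and (2) of the theorem).

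\emph{Non-factorization.} The main obstacle is to verify $p^* \notin M_{C_n}$. Any distribution in $M_{C_n}$ has support of the combinatorial form $\bigcap_{e \in E(C_n)} \{T \subseteq [n] \colon (T \cap e) \in S_e\}$ for some $S_e \subseteq 2^e$, because the zero set of a product of edge potentials is determined edge-wise. The construction must therefore produce an $L$ that is \emph{not} of this form. A sufficient and easily verifiable criterion is that $\{T \cap e \colon T \in L\} = 2^e$ for every edge $e$ while $L \subsetneq 2^{[n]}$; then no edge-wise $S_e$ can reproduce $L$ as the intersection.

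The remaining challenge is to choose the degeneration so that $L$ satisfies this combinatorial condition while $p^*$ simultaneously satisfies the polynomial identities valid on the positive parametric image (necessary for $p^*$ to be a Euclidean limit of factorizing distributions). For $C_4$ the required $L$ and $p^*$ can be produced by an explicit calculation. More generally, the failure of $M_G$ to be Euclidean-closed for non-chordal $G$ is the standard statement that the parametric image of~\eqref{eq:factor} is semi-algebraic but not Zariski closed, a feature absent in the chordal case by the decomposable structure underlying parts (1)--(4) of the theorem.
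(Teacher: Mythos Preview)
The paper does not give its own proof of Theorem~\ref{thm:graphicalmodelsummary}; it states the six items and refers the reader to Lauritzen's book. So there is no argument in the paper to compare yours against. That said, two remarks are in order.

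First, your proposal only treats item~(\ref{it:notChordal}). Items (1)--(5) are not addressed at all. If you intend a full proof of the theorem, you need separate arguments for those (they are standard, but still need to be supplied or cited precisely).

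Second, regarding item~(\ref{it:notChordal}) itself: your strategy is the right one, and in fact it lines up with machinery the paper develops later. Your observation that any factorizing distribution on $C_n$ has support of the form $\bigcap_{e}\{T : T\cap e \in S_e\}$ is exactly the statement that such supports are $A_{C_n}$-feasible in the sense of Definition~\ref{def:Afeasible} and Lemma~\ref{lem:Afeasible2}, and your ``sufficient criterion'' (all edge-margins are full while $L$ is proper) is precisely the condition that $L$ is facial but not $A_{C_n}$-feasible. The paper's Example~\ref{ex:fourcycleFeasible} supplies an explicit such $L$ for $C_4$, namely $\{\emptyset,3,4,23,14,123,124,1234\}$, together with the verification. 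Your extension to a general non-chordal $G$ by tensoring with uniforms on $[m]\setminus V$ also works: since $C_n$ is induced, every maximal clique $S$ of $G$ meets $V$ in a clique of $C_n$, so the full-edge-margin condition lifts to $\{T\cap S : T\in L\times 2^{[m]\setminus V}\}=2^S$ for every $S\in\calc(G)^{\max}$, forcing the putative $S_S$'s to be full and yielding the contradiction.

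The genuine gap in your write-up is that you never actually \emph{produce} the example. You assert that ``for $C_4$ the required $L$ and $p^*$ can be produced by an explicit calculation'' without doing it, and for general $C_n$ you fall back on ``the standard statement that the parametric image is not Zariski closed,'' which is just a restatement of what you are trying to prove. To close the argument you must (i) exhibit a concrete facial $L\subsetneq 2^{[n]}$ with full edge-margins for each $n\ge 4$ (or at least for $n=4$, since the induced-cycle reduction plus tensoring already handles general $G$), and (ii) exhibit a point $p^*\in V(I_{C_n})\cap\rr^L_{>0}$, which then lies in the Euclidean closure by Proposition~\ref{prop:facialimage}. Both steps are short once you write them down, but as it stands the proposal is a correct outline rather than a proof.
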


These results are all described in Lauritzen's book on graphical
models \cite{Lauritzen1996}.  Geiger, Meek, and Sturmfels initiated
analyzing which distributions that satisfy the Markov statements of a
model are in the model, and which distributions that are limits of
those that factor do factor~\cite{GeigerMeekSturmfels2006}.  Their
algebraic tools include primary decomposition and $A$-feasible sets
(see Definition~\ref{def:Afeasible}).  While this work characterizes
the support sets of distributions that factorize according to a model,
it can be difficult check the $A$-feasible condition in practice or
provide a complete list of such sets.
At the Oberwolfach workshop on ``Algebraic Structures in Statistical
Methodology" in December 2022, Steffen Lauritzen proposed the problem
of understanding how the graphical model conditions can be simplified
upon restricting to distributions with lattice support:

\begin{defn}\label{def:latticeDist}
  A subset $L\subseteq 2^{[m]}$ is a \emph{distributive lattice} if
  for all $S,T \in L$ both $S \cap T \in L$ and $S \cup T \in L$.  A
  distribution $p$ is a \emph{lattice distribution} if $\supp(p)$ is a
  distributive lattice.
\end{defn}
In this paper, we begin the study of lattice supported distributions
and their relation to graphical models.  Our main result is
Theorem~\ref{thm:localtofactor}, a generalization of the
Hammersley-Clifford theorem for distributions with \emph{natural}
lattice support (see Definition~\ref{def:natural}).
In Section \ref{sec:Hibiideals}, we study the vanishing ideals of
lattice supported distributions that factor according to a graphical
model, and relate them, in some instances, to the Hibi ideals from
combinatorial commutative algebra.


\section{Background on Distributive Lattices}

A key fact about (finite) distributive lattices is that they always
arise as the lattice of order ideals of a related poset.  In our
setup, the lattice is defined on $2^{[m]}$ while the related poset is
defined on~$[m]$, just like the graphs from which CI statements are
derived.  While technically each lattice is also a poset, when we
speak about a poset here, we usually mean the smaller structure.  The
textbook \cite[Chapter~3]{Stanley2012} contains all further background
on these combinatorial structures.

\begin{defn}\label{def:posetLattice}
  Let $P = ([m], \leq)$ be a poset.
  \begin{itemize}
  \item An \emph{order ideal} of $P$ is a set $S \subseteq [m]$ such
    that if $t \in S$ and $s \leq t$ then $s \in S$.  The set of all
    order ideals of $P$ is denoted $J(P)$.
  \item An element $p \in P$ \emph{covers} an element $q \in P$,
    denoted $p \gtrdot q$, if $p > q$ and for each $r\in P$ with
    $p \leq r \leq q$ it follows $r = p$ or $r = q$.
  \item A \emph{join irreducible element} in a finite distributive
    lattice is an element that
    covers exactly one other element.
\end{itemize}    
\end{defn}

The set $J(P)$ is a distributive lattice for all finite posets $P$ and
vice versa.
\begin{thm}\label{thm:BirkhoffRep}
  Let $L$ be a finite distributive lattice.  Then $L = J(P)$ for some
  poset~$P$.  Specifically, $L = J(P)$ where $P$ is the subposet of
  join irreducible elements of~$L$.
\end{thm}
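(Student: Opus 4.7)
The plan is to establish the classical Birkhoff correspondence explicitly. Let $P$ denote the induced subposet of join irreducible elements of $L$, and define $\phi \colon L \to J(P)$ by $\phi(x) = \{p \in P : p \leq x\}$. The set $\phi(x)$ is downward closed in $P$ and hence an order ideal, so $\phi$ is well-defined. I would then aim to show $\phi$ is a lattice isomorphism with inverse $I \mapsto \bigvee I$, where the join is taken in $L$.

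The first substantive step is a representation lemma: every $x \in L$ equals $\bigvee \phi(x)$, the join in $L$ of the join irreducibles weakly below it. I would argue by induction on $|\{y \in L : y \leq x\}|$. The base case is the bottom element $\hat 0$, where $\phi(\hat 0) = \emptyset$ and $\bigvee \emptyset = \hat 0$. If $x$ is join irreducible, the claim is immediate, since $x$ is the maximum element of $\phi(x)$. Otherwise finiteness together with the definition let us write $x$ as a join of strictly smaller elements, and applying the induction hypothesis to each summand yields $x = \bigvee \phi(x)$. This immediately gives injectivity, since $\phi(x) = \phi(y)$ forces $x = \bigvee \phi(x) = \bigvee \phi(y) = y$.

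For surjectivity, given $I \in J(P)$ set $x = \bigvee I$; the inclusion $I \subseteq \phi(x)$ is obvious. The reverse inclusion is the crux of the argument and the only place where distributivity of $L$ is used. Given a join irreducible $p$ with $p \leq \bigvee_{q \in I} q$, distributivity gives
\[
p = p \wedge \bigvee_{q \in I} q = \bigvee_{q \in I} (p \wedge q),
\]
and join irreducibility of $p$ forces $p = p \wedge q_0$, hence $p \leq q_0$, for some $q_0 \in I$; since $I$ is an order ideal in $P$, this yields $p \in I$. This is the hardest step, since without distributivity one only obtains an order embedding of $L$ into $J(P)$ that need not be surjective.

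Finally, $x \leq y$ in $L$ clearly implies $\phi(x) \subseteq \phi(y)$, while the representation lemma supplies the converse via $x = \bigvee \phi(x) \leq \bigvee \phi(y) = y$. Thus $\phi$ is an isomorphism of posets, and hence of lattices, which completes the proof of the representation $L \iso J(P)$.
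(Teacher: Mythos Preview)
Your proof is correct and is the classical argument for Birkhoff's representation theorem. The paper itself does not supply a proof of this statement; it is quoted as background, with the reader referred to \cite[Chapter~3]{Stanley2012} for details. The proof you give is essentially the one found there, so there is nothing to compare.

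One small remark: in the surjectivity step you invoke ``join irreducibility of $p$ forces $p = p \wedge q_0$ for some $q_0 \in I$,'' which uses the characterization that a join irreducible element cannot be written as a join of strictly smaller elements. The paper's Definition~\ref{def:posetLattice} instead defines join irreducibles as elements covering exactly one other element; the two notions coincide in a finite lattice, but if you want the proof to be self-contained relative to the paper's definitions you might spell out this equivalence in one line.
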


\begin{figure}[t]
  \centering
  \begin{tikzpicture}[scale=1, every node/.style={circle, draw, fill=white, inner sep=2pt}]
    \node (1) at (0,0) {1};
    \node (3) at (2,0) {3};
    \node (2) at (1,2) {2};
    \node (4) at (3,2) {4};
    
    \draw (1) -- (2);
    \draw (2) -- (3);
    \draw (3) -- (4);
  \end{tikzpicture}
  \hspace{1cm}
  \begin{tikzpicture}[scale=.5, every node/.style={}]
    \node (empty) at (0,0) {\( \emptyset \)};
    \node (1) at (-2,2) {\( \{1\} \)};
    \node (3) at (2,2) {\( \{3\} \)};
    \node (13) at (0,4) {\( \{1,3\} \)};
    \node (34) at (4,4) {\( \{3,4\} \)};
    \node (123) at (-2,6) {\( \{1,2,3\} \)};
    \node (134) at (2,6) {\( \{1,3,4\} \)};
    \node (1234) at (0,8) {\( \{1,2,3,4\} \)};
    
    \draw (empty) -- (1);
    \draw (empty) -- (3);
    \draw (1) -- (13);
    \draw (3) -- (13);
    \draw (3) -- (34);
    \draw (13) -- (123);
    \draw (13) -- (134);
    \draw (34) -- (134);
    \draw (123) -- (1234);
    \draw (134) -- (1234);
  \end{tikzpicture}
  \hspace{1cm}
  \begin{tikzpicture}[scale=1, every node/.style={circle, draw, fill=white, inner sep=2pt}]
    \node (1) at (0,0) {1};
    \node (3) at (2,0) {3};
    \node (2) at (1,2) {2};
    \node (4) at (3,2) {4};
    
    \draw (1) -- (2);
    \draw (2) -- (3);
    \draw (3) -- (4);
    \draw (1) -- (4);
  \end{tikzpicture}
  \caption{A poset, its lattice of order ideals, and a graph appearing
    in Example~\ref{ex:master}}
  \label{fig:posetgraph}
\end{figure}
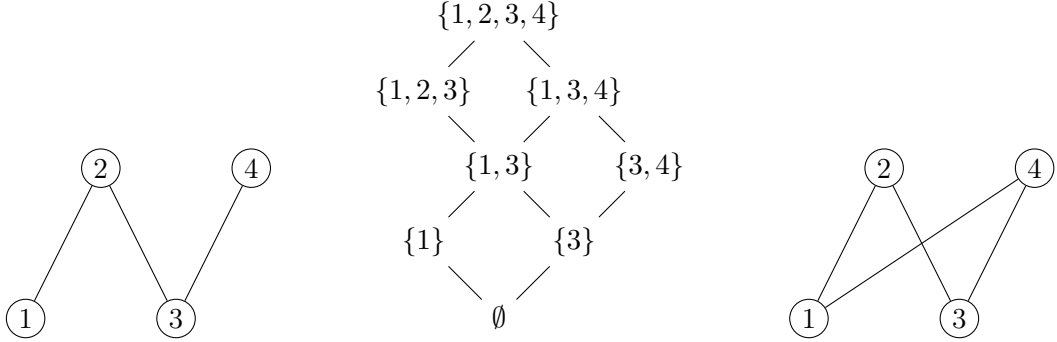
\begin{ex}
  Figure \ref{fig:posetgraph} shows the Hasse diagram of a simple
  poset $P$ on the left.  Its lattice $L = J(P)$ of order ideals is
  depicted in the middle of the figure.  For example, $\{1,2\}$ is not
  an order ideal as $2$ covers $3$ in the poset and $3$ is not
  included in $\{1,2\}$.  The join irreducible elements of the lattice
  in the middle are $\{1\}$, $\{3\}$, $\{3,4\}$ and $\{1,2,3\}$.  Each
  such set $S$ covers exactly on other element in $S'\in L$ and
  corresponds to the single element of $S\setminus S'$ in~$P$.
\end{ex}

\begin{defn}\label{def:natural}
  A distributive lattice $L$ that is a sublattice of $2^{[m]}$ is
  \emph{natural}, if $\emptyset \in L$, $[m] \in L$, and the rank of
  any element $S$ in $L$ is equal to the cardinality~$\#S$.
\end{defn}
\begin{ex}
  Let $m = 2$, and $p$ be a probability with $p_\emptyset > 0$,
  $p_{12} > 0$, and $p_1 = p_2 = 0$.  This distribution is lattice
  supported, but the lattice is not a natural distributive lattice.
  Its underlying poset has a single element~$12$.  It is not natural
  that the elements of the underlying poset do not correspond to
  individual random variables.  The poset in the middle of
  Figure~\ref{fig:posetgraph} is natural and the elements of its poset
  are singletons.
\end{ex}

The following lemma shows how naturality is applied in our proofs.
\begin{lemma}\label{lem:usenatural}
  Let $L\subseteq 2^{[m]}$ be a distributive lattice.  Consider the
  map that maps each join irreducible $S \in L$ to $S\setminus T$
  where $T\in L$ is the unique element covered by~$S$.  If $L$ is
  natural, this map is a bijection and thus the poset $P$ in
  Theorem~\ref{thm:BirkhoffRep} is a poset on~$[m]$.
\end{lemma}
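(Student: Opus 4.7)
The plan is to exploit naturality twice---once to ensure the map lands in $[m]$, and once to match the sizes of the two sides---after which a direct injectivity argument will finish the bijection.

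First I would verify that $\phi(S) := S \setminus T$ is always a single element of $[m]$. Since $L \subseteq 2^{[m]}$, the lattice order on $L$ is set inclusion, so any cover $T \lessdot S$ in $L$ has $T \subsetneq S$. By naturality, $|S| = \rank S = \rank T + 1 = |T| + 1$, so $S \setminus T$ is a singleton; applied to a join irreducible $S$ with its unique covered element $T$, this makes $\phi(S) \in [m]$. Next I would match the cardinalities of the two sides. By Theorem~\ref{thm:BirkhoffRep}, $L = J(P)$ with $P$ the subposet of join irreducibles, so the number of join irreducibles equals $|P|$. The top of $J(P)$ is the full order ideal $P$, whose rank in $J(P)$ is $|P|$; but this same top is $[m] \in L$, whose rank in $L$ is $m$ by naturality. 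Hence $|P| = m$, and it suffices to prove $\phi$ is injective.

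For injectivity, suppose $S_1 \neq S_2$ are join irreducibles with $\phi(S_1) = \phi(S_2) = i$; the unique covered elements are then $T_j = S_j \setminus \{i\}$. I would invoke the standard fact that in any finite lattice, if $S$ covers a unique element $T$, then every $y < S$ satisfies $y \leq T$ (extend any chain from $y$ to $S$ to a maximal one; its penultimate term is a cover of $S$, hence equals $T$). Applied to $S_1 \cap S_2 \in L$: this meet contains $i$, so it is not contained in $T_1 = S_1 \setminus \{i\}$, forcing $S_1 \cap S_2 = S_1$, i.e., $S_1 \subseteq S_2$; symmetrically $S_2 \subseteq S_1$, a contradiction.

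The main obstacle is really a conceptual one: arranging naturality to do two jobs at once, locally (matching lattice rank to set cardinality on each cover, so $\phi$ is defined into $[m]$) and globally (forcing the rank of the top of $L$ to equal $m$, so $\phi$ has the correct target size). The example following Definition~\ref{def:natural}, where the poset of a non-natural lattice lives on $\{12\}$ rather than $[2]$, shows that this dual use of naturality is exactly what the lemma requires.
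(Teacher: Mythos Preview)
Your proof is correct and is in fact more explicit than the paper's. The paper argues via maximal chains: naturality forces every maximal chain in $L$ to have length $m$, so that each step of such a chain adds a single element of $[m]$ and the chain records a total order on $[m]$; it then invokes the standard fact that the poset of join irreducibles of $J(P)$ is the common coarsening of all these total orders (equivalently, maximal chains of $J(P)$ correspond to linear extensions of $P$). Your route is different: you verify directly that $\phi$ lands in singletons, match cardinalities by comparing the rank of the top element in the two descriptions of $L$, and prove injectivity with a short meet argument exploiting that everything strictly below a join irreducible lies below its unique lower cover. The paper's version is terser and situates the lemma in the linear-extensions picture; your version is self-contained, proves the bijection as literally stated, and makes transparent the two separate uses of naturality (local on covers, global on the top element).
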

\begin{proof}
  Consider the set of maximal chains in~$L$.  If $L$ is natural, all
  chains are of length~$m$ and each chain induces a total order
  on~$[m]$.  The poset of join irreducible elements is exactly the
  common coarsening of these total orders.
\end{proof}


\section{Ideals associated CI statements and graphical models}
\label{sec:ideals}

We now move to polynomial equations.  Let $\kk$ be any field and
consider the polynomial ring $\kk[p] := \kk[p_S : S \in 2^{[m]}]$.  In
algebraic statistics, statistical models for discrete random variables
often consist of solutions to polynomial equations (stored in ideals
of $\kk[p]$) in the elementary probabilities.  This is true, for
example, for conditional independence.  Each statement
$X_{A}\ind X_{B} | X_{C}$ corresponds to a system of polynomial
equations in the variables~$p_{S}$ so that the non-negative and
summing to one solutions to that system are exactly the distributions
for which the conditional independence holds.  See
\cite[Chapter~4]{ASBookSeth}.

\medskip We describe ideals associated to a graph $G = ([m],E)$ and
determined by the CI statements implied by the graph.  First we
discuss the CI statements themselves.
Let $A, B, C$ be disjoint subsets of $[m]$.  We associate polynomial
equations to each CI statement
$X_A \ind X_B | X_C$, which we usually abbreviate as $A \ind B | C$.
For binary random variables, this statement yields the following
quadratic equations of probabilities
\begin{multline*}
  P(X_A = i_A, X_B = i_B, X_C = i_C) P(X_A = j_A, X_B = j_B, X_C = i_C) = \\
P(X_A = j_A, X_B = i_B, X_C = i_C) P(X_A = i_A, X_B = j_B, X_C = i_C),
\end{multline*}
where $i_A, j_A \in \{0,1\}^{\#A}$, $i_B, j_B \in \{0,1\}^{\#B}$, and
$i_C \in \{0,1\}^{\#C}$.

A CI statement $A\ind B|C$ is \emph{saturated} if
$A\cup B\cup C = [m]$.  For such statements, these equations are
binomials in $\rr[p]$.  For unsaturated statements the probabilities
occurring in the equations are themselves sums over $p_{S}$ and the
equations are not binomial.

\begin{defn}
  Let $A \ind B | C$ be a saturated CI statement.  The
  \emph{conditional independence ideal} is the binomial ideal:
  \[
    I_{A \ind B | C} = \langle p_{A_1 \cup B_1 \cup C_1} p_{A_2 \cup
      B_2 \cup C_1} - p_{A_1 \cup B_2 \cup C1} p_{A_2 \cup B_1 \cup
      C_1} : A_1, A_2 \subseteq A, B_1, B_2 \subseteq B, C_1
    \subseteq C \rangle.
  \]
  If $\calc$ is a set of saturated CI statements then we let
  \[
    I_\calc  =  \sum_{A \ind B | C \in \calc}  I_{A \ind B | C}.
  \]
\end{defn}

The set $\mathrm{gl}(G)$ of global Markov statements contains also
many non-saturated statements, but the same ideal $I_{\mathrm{gl}(G)}$
is defined by only the saturated global Markov
statements~\cite[Lemma~6.11]{TFP-II}.  Consequently, both $I_{\gl(G)}$
and $I_{\pw(G)}$ are binomial ideals.  Often these ideals have a
complicated structure, for example, they need not be radical or
prime~\cite[Example~4.9]{kahle12:positive-margins}.  At least the
binomials coming from the pairwise Markov statements are simple to
write down.  For each statement $i \ind j | [m] \setminus \{i,j\} $,
we get one binomial for each $C \subseteq [m] \setminus \{i,j\}$,
namely
\[
p_C  p_{C \cup \{i,j\}}  -  p_{C \cup\{i\}}  p_{C \cup\{j\}}.
\]

There is one more ideal associated to a graphical model.  The image of
the parametrization~\eqref{eq:factor} is not a variety per se, but its
Zariski-closure has a nice description by a toric ideal.  Using the
notation from the introduction, it equals the kernel of the
homomorphism
\begin{equation}\label{eq:phiparam}
  \phi_{G}\colon \kk[p_{R} : R\in 2^{[m]}] \to \kk[a^{S}_{T},
  S\in\calc(G)^{\max}, T\subset S], \qquad p_{R} \mapsto \prod_{S\in
    \calc(G)}a^{S}_{S\cap R}, R\subseteq [m].
\end{equation}
and is denoted $I_{G} := \ker (\phi_{G})$.  Since varieties contain
limits, the set $V(I_{G}) \cap \rr^{2^{[m]}}_{\ge 0}$ consists not
only of distributions that factor, but also of limits of distributions
that factor.  These are the extra distributions arising for
non-chordal graphs in
Theorem~\ref{thm:graphicalmodelsummary}~(\ref{it:notChordal}).

The parametrization of strictly positive distributions in the
graphical model is not unique.  This means that other factorizations
than \eqref{eq:factor} can yield the same set of strictly positive
distributions.  We employ the following alternative parametrization
for the binary graphical model which often has fewer parameters.
Introduce one parameter $c_S$ for each (not necessarily maximal)
clique $S \in \calc(G)$ and consider the homomorphism
\begin{equation}
  \label{eq:allCliqueParam}
  \psi_{G}\colon \kk[p_{R} : R\in 2^{[m]}] \to \kk[c_S,
  S\in\calc(G), T\subset S], \qquad p_{R} \mapsto \prod_{S\in
    \calc(G):  S \subseteq R} c_S,\quad R\subseteq [m].
\end{equation}
Then $\ker \phi_G = \ker \psi_G = I_{G}$.  Since these maps are
monomial, they have the same image for positive distributions, that
is, away from the coordinate hyperplanes.  Their images in all
of~$\rr^{2^{[m]}}$ are usually different, though.  Of course the image consists 
of points that need not be probability distributions because the
condition $\sum_{S}p_{S} = 1$ is not necessarily satisfied.  This can
be achieved by simply dividing by $Z = \sum_{S}p_{S}$.  For our
consideration this fact can be neglected as each probability
distribution that has a factorization divided by the sum of the
coordinates also has a factorization where this division is not
necessary, simply by changing one of the parameters.

\begin{ex}\label{ex:fourCycleParam}
  Consider the four-cycle $G$ on the right hand side in
  Figure~\ref{fig:posetgraph}.  The clique sets are
  \[
    \calc(G) = \{\emptyset, 1, 2, 3, 4, 12, 23, 34, 14\} \quad\text{ and }\quad
    \calc(G)^{\max} = \{ 12, 23, 34, 14 \}.
  \]
  We have 
  \[
    \phi_G(p_R) = a^{12}_{R \cap 12} a^{23}_{R \cap {23}} a^{34}_{R \cap
      34} a^{14}_{R \cap 14}.
  \]
  Here and in the following examples we use shorthand that, e.g.,
  $12 = \{1,2\}$, etc.  On the other hand, $\psi_G$ does not give a
  compact formula, but we can explicitly write the terms:
  \begin{multline*}
    \psi_G(p_\emptyset) =  c_\emptyset,\;  \psi_G(p_1) = c_\emptyset c_1,\; 
    \ldots \\ \psi_G(p_{234}) = c_\emptyset c_2 c_3 c_4 c_{23} c_{34},\; 
    \psi_G(p_{1234}) = c_\emptyset c_1 c_2 c_3 c_4 c_{12} c_{23}
    c_{34} c_{14}.
  \end{multline*}
  The four-cycle gives rise to two CI statements: $1\ind 3|24$ and
  $2\ind 4 |13$.  The pairwise Markov and the global Markov ideal
  agree because the graph is so small.  They are generated by eight
  quadrics
  \begin{equation}\label{eq:quadrics}
    \begin{split}
      & p_{134}p_{123}-p_{13}p_{1234},\; p_{234}p_{124}-p_{24}p_{1234},\;
        p_{14}p_{12}-p_1p_{124}, \; p_{23}p_{12}-p_2p_{123},\\
      & \qquad p_{34}p_{14}-p_4p_{134},\; p_3p_1-p_\emptyset p_{13},\;
      p_{34}p_{23}-p_3p_{234},\; p_4p_2-p_\emptyset p_{24}.
    \end{split}
  \end{equation}
  Finally, the toric ideal is a larger ideal and there are fewer
  distributions that are limits of factoring distributions than those
  satisfying the Markov statements.  The toric ideal $I_{G}$ is
  minimally generated by the quadrics above and
  \begin{equation}\label{eq:quartics}
    \begin{split}
      p_\emptyset p_{34}p_{124}p_{123}-p_4p_3p_{12}p_{1234},\; &
      p_\emptyset p_{234}p_{14}p_{123}-p_4p_{23}p_1p_{1234},\\
      p_\emptyset p_{23}p_{134}p_{124}-p_3p_2p_{14}p_{1234},\; &
      p_4p_{23}p_{13}p_{124}-p_3p_{24}p_{14}p_{123},\\
      p_\emptyset p_{234}p_{134}p_{12}-p_{34}p_2p_1p_{1234},\; &
      p_3p_{24}p_{134}p_{12}-p_{34}p_2p_{13}p_{124},\\
      p_4p_{234}p_{13}p_{12}-p_{34}p_{24}p_1p_{123},\; &
      p_2p_{234}p_{14}p_{13}-p_{24}p_{23}p_1p_{134}.
    \end{split}
  \end{equation}
\end{ex}


\section{Compatibility of Lattice Structure and Graph}

In this section, we explore the compatibility of the lattice structure
with graphs.  If the support $L\subseteq 2^{[m]}$ of a lattice based
distribution is natural, the poset $P$ with $L = J(P)$ is a poset on
$[m]$ by Lemma~\ref{lem:usenatural}.  We can then compare it to graphs
on $[m]$ and their Markov properties.

\begin{prop}\label{prop:covering}
  Suppose that $p$ is a probability distribution that is supported on
  a natural distributive lattice $L = J(P)$.  Suppose that $p$
  satisfies all the pairwise Markov statements associated to the graph
  $G = ([m], E)$.  Suppose that $i \gtrdot j$ in $P$.  Then $(i,j)$ is
  an edge in the graph $G$.
\end{prop}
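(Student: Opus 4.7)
The plan is to argue by contradiction. Suppose $(i,j)$ is not an edge of $G$; then the pairwise Markov statement $X_i \ind X_j \mid X_{[m]\setminus\{i,j\}}$ holds for $p$, so for every $C \subseteq [m]\setminus\{i,j\}$ one has the binomial relation
\[
p_C \, p_{C \cup \{i,j\}} \;=\; p_{C \cup \{i\}} \, p_{C \cup \{j\}}.
\]
The goal is to exhibit a single $C$ for which three of the four sets lie in $L$ while the fourth does not, forcing the right-hand side to vanish while the left-hand side is strictly positive.

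The candidate I would try is
\[
C \;:=\; \{k \in [m] : k < i \text{ in } P\} \setminus \{j\}.
\]
First I would verify that $C$ is itself an order ideal: if $k \in C$ and $\ell \le k$ in $P$, then $\ell < i$, and if $\ell$ were equal to $j$ then $j \le k < i$ together with $k \neq j$ would produce a strict chain $j < k < i$, contradicting the cover $i \gtrdot j$. Next I would observe that $C \cup \{j\} = \{k : k < i\}$ and $C \cup \{i,j\} = \{k : k \le i\}$ are the open and closed principal order ideals of $i$, hence lie in $L$, while $C \cup \{i\}$ is \emph{not} an order ideal because it contains $i$ but omits $j < i$.

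Since $\supp(p) = L = J(P)$, with $P$ a poset on $[m]$ by Lemma~\ref{lem:usenatural}, we conclude $p_C, p_{C \cup \{j\}}, p_{C \cup \{i,j\}} > 0$ but $p_{C \cup \{i\}} = 0$. Substituting into the pairwise binomial yields a positive left-hand side equal to zero, the desired contradiction. The only place where the cover hypothesis $i \gtrdot j$ is really used — rather than just $i > j$ — is in showing that $\{k < i\} \setminus \{j\}$ remains down-closed; without the cover an intermediate $j < k < i$ would break this. I do not anticipate a serious obstacle beyond locating this particular $C$.
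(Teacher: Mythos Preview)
Your proof is correct and essentially identical to the paper's: the paper defines $C' = \{l : l \le i\}$ and sets $C = C' \setminus \{i,j\}$, which is exactly your $\{k : k < i\} \setminus \{j\}$, and then checks the same four sets and derives the same contradiction from the pairwise Markov binomial. Your observation about where the cover hypothesis is genuinely needed matches the paper's remark following the proposition (illustrated by Example~\ref{ex:onlyCover}).
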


\begin{proof}
  Let $i \gtrdot j$ in $P$ and suppose that the edge $(i,j)$ is not
  in~$G$.  Let $C' = \{l \in P : l\le i\}$ and let
  $C = C' \setminus \{i,j\}$.  Then $C'$ is an order ideal of $P$ by
  construction, $C \cup \{j\}$ is an order ideal, since $i$ is greater
  than all elements in $C \cup \{j\}$, and $C$ is also an order ideal
  since if it was not, there would be an element $k \in C$ that was
  greater than $j$ and less than $i$ contradicting that $i$
  covers~$j$.  On the other hand, $C \cup \{i\}$ is not an order ideal
  since $j < i$.

  Since $p$ is supported on the lattice $L = J(P)$, we have
  \begin{equation}\label{eqn:latticeineq}
    p_{C} > 0, \quad p_{C \cup \{j \}} > 0,  \quad p_{C \cup \{i,j\} } > 0, 
    \quad  p_{C \cup \{i \}} = 0.
  \end{equation}
  However, if $p$ satisfies the pairwise Markov statements from the
  graph~$G$ and $(i,j)$ is not an edge in~$G$, then we must have
  \begin{equation}\label{eqn:CIbad}
    p_C  p_{C \cup \{i,j\}}  -  p_{C \cup\{i\}}  p_{C \cup\{j\}} = 0
  \end{equation}
  because $i \ind j | [m] \setminus \{i,j\}$ holds for~$p$.  It is not
  possible that both (\ref{eqn:latticeineq}) and (\ref{eqn:CIbad}) are
  satisfied.
\end{proof}

The assumption on a \emph{cover} relation in the proposition is
essential. If just $i > j$ in the poset, 
then it is not required that $(i,j)$ is an edge of $G$ as the
following example shows.
\begin{ex}\label{ex:onlyCover}
  Let $P$ be the three element chain with $3 > 2 > 1$.  Then $3 > 1$
  in $P$.  Any distribution $p$ supported on the lattice $L = J(P)$
  satisfies
  \[
    p_\emptyset > 0,\; p_1 > 0,\; p_{12}> 0,\; p_{123} > 0 \quad \mbox{ and }\quad  p_{2} = p_{3} = p_{13} = p_{23} = 0.
  \]
  Such a distribution satisfies the CI statement $1 \ind 3 | 2$ since
  \[
    p_\emptyset p_{13} - p_{1}p_{3}  = 0  \quad \mbox{ and } \quad
    p_{2} p_{123} - p_{12} p_{23} = 0.
  \]
  Thus the edge $(1,3)$ need not appear in a graph $G$ that the
  lattice distribution $p$ is Markov to.
\end{ex}

Proposition \ref{prop:covering} says that the edges of the Hasse
diagram of $P$ appear in any graph $G$ that $p$ is Markov to, if $p$
has natural lattice support $L = J(P)$.  This leads us to the
following definition.

\begin{defn}\label{def:mingraph}
  Let $L = J(P)$ be a natural lattice.  The \emph{minimal graph} of
  $L$ is the graph of the Hasse diagram of~$P$, i.e.\ the graph on
  $[m]$ whose edges are the cover relations in~$P$.
\end{defn}

\begin{cor}\label{cor:mingraph}
  Suppose that $p$ is a probability distribution that is supported on
  a natural distributive lattice $L = J(P)$.  If $p$ satisfies all the
  pairwise Markov statements associated to the graph $G = ([m], E)$,
  then the minimal graph of $L$ is a subgraph of~$G$.
\end{cor}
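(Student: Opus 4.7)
The plan is to observe that this corollary is essentially a direct repackaging of Proposition~\ref{prop:covering} in the language of Definition~\ref{def:mingraph}. By definition, the edge set of the minimal graph of $L = J(P)$ consists precisely of the cover relations $i \gtrdot j$ in the poset $P$. So to show that the minimal graph is a subgraph of $G = ([m], E)$, it suffices to show that every cover relation in $P$ is an edge of $G$.

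First I would fix an arbitrary cover relation $i \gtrdot j$ in $P$. Applying Proposition~\ref{prop:covering} to $p$, $L = J(P)$, and $G$ immediately yields that $(i,j) \in E$. Since the vertex set of the minimal graph is $[m]$ by Lemma~\ref{lem:usenatural} (which is also the vertex set of $G$), and every edge of the minimal graph lies in $E$, we conclude that the minimal graph is a subgraph of~$G$.

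There is no real obstacle here; the entire content sits in Proposition~\ref{prop:covering}, and the corollary is just the global statement obtained by quantifying over all cover relations. The only thing worth emphasizing in the write-up is that naturality of $L$ is being used implicitly (through Lemma~\ref{lem:usenatural}) to ensure that $P$ lives on $[m]$, so that the minimal graph and $G$ are graphs on the same vertex set and the subgraph relation makes sense.
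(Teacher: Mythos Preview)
Your proposal is correct and matches the paper's treatment: the corollary is stated without a separate proof in the paper precisely because it is an immediate consequence of Proposition~\ref{prop:covering} applied to each cover relation, together with Definition~\ref{def:mingraph}. Your remark that naturality (via Lemma~\ref{lem:usenatural}) is what puts $P$ on $[m]$ so the subgraph comparison makes sense is exactly the implicit point.
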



\section{Limits of Distributions that Factorize}

In this section we explore the factorization of distributions that
appear in parametrizations such as~\eqref{eq:factor} and used in items
(\ref{it:posPW=>factor})-(\ref{it:notChordal}) of
Theorem~\ref{thm:graphicalmodelsummary}.
Theorem~\ref{thm:limitfactor} shows that having a natural lattice
support and being in the closure of the graphical model suffices to
factor, i.e.\ lie in the graphical model.  To expose this, we use the
framework from toric varieties established
in~\cite{GeigerMeekSturmfels2006}.

Let $A \in \nn^{d \times r}$ be an integer matrix and consider the map
$\phi^A\colon \rr_{\geq 0}^d \rightarrow \rr_{\geq 0}^r$ defined by
\[
(\theta_1, \dotsc, \theta_d) \mapsto  (\phi^A_1(\theta), \dotsc, \phi^A_m(\theta))
\quad \text{ where }\quad 
\phi^A_j(\theta_1, \dotsc, \theta_d)   =  \prod_{i = 1}^d  \theta_i^{a_{ij}}.
\]
Both maps \eqref{eq:phiparam} and \eqref{eq:allCliqueParam} are of
this form, for two different matrices.  For $\phi_{G}$ or
equivalently the parametrization~\eqref{eq:factor}, the associated
matrix is denoted~$A_{G}$.  The rows of $A_G$ are indexed by pairs
$(S,T)$ such that $S \in \calc(G)^{\max}$ and $T \subseteq S$.  The
columns of $A_G$ are indexed by subsets $U \subseteq [m]$.  The entry
of $A_G$ indexed by the pair $((S,T), U)$ equals $1$ if $U \cap S = T$
and zero otherwise.  We denote the matrix for the map $\psi_{G}$ and
the alternative parametrization by~$B_{G}$.  Its columns are likewise
indexed by subsets of $U\subseteq [m]$, while its rows are indexed by
all cliques $S$ of~$G$.  The $(S,U)$ entry is one if
$S\subseteq U$ and zero otherwise.  Finally, the fact that
$\ker (\phi_{G}) = \ker (\psi_{G})$ translates to
$\ker (A_{G}) = \ker(B_{G})$, where these latter kernels are
considered in~$\zz^{r}$.

Back in the general description, a distribution \emph{factorizes
  according to the map $\phi^A$} if it lies in the image of~$\phi^A$.
A distribution that is the limit of distributions that factorizes is
in the closure of the image of $\phi^A$.  This closure can be
described by the vanishing ideal of the image, the toric
ideal~$I_G = I_{A_{G}} = \ker (\phi_{G})$ from
Section~\ref{sec:ideals}.  If one starts with an arbitrary matrix $A$,
this is denoted~$I_{A} = \ker (\phi^{A})$.  Specifically, the closure
is $V(I_A)\cap \rr^r_{\geq 0}$, that is, the nonnegative part of the
toric variety~$V(I_A)$.

The problem that some distributions factor and others do not, arises
only for distributions with limited support.  Indeed, if some $p_{i}$
ought to be zero, then some parameters must be zero.  As those might
appear in more than one of the products, there are combinatorial
constraints.  They can be expressed as follows.  For each $j \in [r]$
let $a^j$ denote the $j$th column of~$A$.

\begin{defn}\label{def:Afeasible}
  Let $A \in \nn^{d \times r}$ and suppose that every column of $A$
  contains a nonzero entry.  A subset of $F \subseteq [r]$ of column
  indices of $A$ is \emph{$A$-feasible} if for all
  $j \in [r] \setminus F$,
  \[
    \supp(a^j) \not\subseteq  \bigcup_{k \in F}  \supp(a^k).
  \]
\end{defn}

The constraints on $p$ being in the image of $\phi_{A}$ concern
$\supp(p)$ as a set, not the actual values of $p_{i}$ for
$i\in \supp(p)$ as the following theorem shows.

\begin{thm}\label{thm:GMS}
  \cite[Theorem 3.1]{GeigerMeekSturmfels2006} Let
  $A \in \nn^{d \times r}$ and suppose that every column of $A$
  contains a nonzero entry.  Let $p \in V(I_A)\cap \rr^r_{\geq 0}$.
  Then $p \in \phi^A(\rr^d_{\geq})$ if and only if $\supp(p)$ is
  \emph{$A$-feasible}.
\end{thm}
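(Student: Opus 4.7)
The plan is to prove both directions directly, using supports and the standard toric dictionary between $V(I_A)\cap \rr^r_{\geq 0}$ and the image of $\phi^A$.

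For the forward direction, suppose $p = \phi^A(\theta)$ with $\theta\in \rr^d_{\geq 0}$. Since $p_j = \prod_i \theta_i^{a_{ij}}$, one has $p_j>0$ iff $\supp(a^j)\subseteq \supp(\theta)$. Setting $F = \supp(p)$, the inclusion $\bigcup_{k\in F}\supp(a^k) \subseteq \supp(\theta)$ is automatic, and for every $j\notin F$ one has $\supp(a^j)\not\subseteq \supp(\theta)$, hence $\supp(a^j)\not\subseteq \bigcup_{k\in F}\supp(a^k)$. This is precisely $A$-feasibility of $F$.

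For the reverse direction, set $F = \supp(p)$, let $I = \bigcup_{k\in F}\supp(a^k)$, and aim to construct $\theta$ with $\theta_i = 0$ for $i\notin I$ and $\theta_i>0$ for $i\in I$. The zero coordinates handle the vanishing condition: for $j\notin F$, $A$-feasibility supplies some $i_0\in \supp(a^j)\setminus I$, and then $\theta_{i_0}^{a_{i_0 j}} = 0$ forces $\phi^A_j(\theta) = 0 = p_j$. The task therefore reduces to realizing $p_F$ as $\phi^{A'}(\theta_I)$ with strictly positive $\theta_I$, where $A'$ denotes the submatrix of $A$ on rows $I$ and columns $F$. Taking logarithms, this amounts to showing $\log p_F$ lies in the row span of $A'$, equivalently that $\log p_F$ is orthogonal to $\ker_\rr(A')$.

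The bridge to $V(I_A)$ rests on the following observation: by the definition of $I$, $a_{ij} = 0$ whenever $j\in F$ and $i\notin I$. Hence any $w\in \ker_\zz(A')$, extended by zero outside $F$ to a vector $\tilde w\in \zz^r$, satisfies $A\tilde w = 0$. The binomial $p^{\tilde w^+} - p^{\tilde w^-}$ then lies in $I_A$ and vanishes on $p$, so $\prod_{j\in F} p_j^{w_j} = 1$ because all $p_j$ with $j\in F$ are strictly positive. Taking logs yields $\langle w,\log p_F\rangle = 0$ for every $w\in \ker_\zz(A')$. Since $A'$ has integer entries, $\ker_\rr(A') = \ker_\zz(A')\otimes\rr$, so $\log p_F$ is orthogonal to $\ker_\rr(A')$ and lies in the image of $(A')^T$. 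Exponentiating a preimage $\lambda\in \rr^I$ gives positive $\theta_I$ with $\phi^{A'}(\theta_I) = p_F$.

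The main obstacle is precisely this transfer from integer binomial relations inside $I_A$ to the real linear-algebraic condition needed to invert the logarithmic parametrization. It is resolved by (i) the combinatorial lemma that the restriction $\ker_\zz(A')\hookrightarrow \ker_\zz(A)$ via zero extension is well-defined (using the definition of $I$), and (ii) the rationality of $A$, so that real and integer kernels have the same orthogonal complement.
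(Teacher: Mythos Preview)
The paper does not give its own proof of Theorem~\ref{thm:GMS}; it is quoted verbatim from \cite[Theorem~3.1]{GeigerMeekSturmfels2006}. There is therefore no in-paper argument to compare against directly. That said, your proof is correct, and it is essentially the standard argument from \cite{GeigerMeekSturmfels2006} that the paper alludes to later in the proof of Proposition~\ref{prop:facialimage} (``taking logarithms and solving the resulting linear equations just as in the proof of \cite[Theorem~3.1]{GeigerMeekSturmfels2006}''). The forward direction is the obvious support computation; for the converse you correctly identify the key step as passing from membership in $V(I_A)$ to the linear condition $\log p_F \perp \ker_\rr(A')$, and your mechanism for this---extending $w\in\ker_\zz(A')$ by zeros to a kernel vector of $A$ using that $a_{ij}=0$ for $j\in F$, $i\notin I$, then invoking the corresponding toric binomial---is exactly right. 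The only point worth making explicit for completeness is the convention $0^0=1$ when evaluating $\phi^A_j(\theta)$ for $j\in F$ and $i\notin I$; you use it implicitly and it is harmless here since $a_{ij}=0$ in that range.
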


We find the following characterization of $A$-feasible sets useful in
the proof of the next theorem.

\begin{lemma}\label{lem:Afeasible2}
  A set $F \subseteq [r]$ is $A$-feasible if and only if there is a
  subset $H \subseteq [d]$ such that
  \[
    F = \{ j \in [r]: a_{ij} = 0 \mbox{ for all } i \in H \}.
  \]
\end{lemma}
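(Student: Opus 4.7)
The plan is to prove both implications directly, with an explicit choice of $H$ for the forward direction.

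For the easier direction, I would start by assuming $F = \{j \in [r] : a_{ij} = 0 \text{ for all } i \in H\}$ for some $H \subseteq [d]$. By the defining condition, every $k \in F$ satisfies $\supp(a^k) \subseteq [d] \setminus H$, so $\bigcup_{k \in F} \supp(a^k) \subseteq [d] \setminus H$. Conversely, any $j \in [r] \setminus F$ must have some $i \in H$ with $a_{ij} \neq 0$, so $i \in \supp(a^j)$ but $i \notin \bigcup_{k \in F} \supp(a^k)$. This shows $\supp(a^j) \not\subseteq \bigcup_{k \in F} \supp(a^k)$, verifying $A$-feasibility.

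For the other direction, I would make the canonical choice
\[
H \;:=\; [d] \setminus \bigcup_{k \in F} \supp(a^k).
\]
With this choice, the condition ``$a_{ij} = 0$ for all $i \in H$'' is equivalent to $\supp(a^j) \cap H = \emptyset$, i.e.\ $\supp(a^j) \subseteq \bigcup_{k \in F} \supp(a^k)$. For $k \in F$ this containment is tautological, so $F$ is contained in the set on the right-hand side of the claimed equality. For $j \notin F$, $A$-feasibility gives $\supp(a^j) \not\subseteq \bigcup_{k \in F} \supp(a^k)$, which means $\supp(a^j) \cap H \neq \emptyset$ and hence $j$ is excluded from the right-hand set. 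This establishes equality.

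There is no real obstacle here, since the definition of $A$-feasibility is almost literally equivalent to the complement condition once the correct $H$ is identified. The only thing to take care of is the bookkeeping of complements (rows vs.\ columns, supports vs.\ zero sets), and to verify that the implicit hypothesis that every column of $A$ is nonzero does not interact with the argument in a hidden way — it does not, because that hypothesis enters only into the statement of Theorem~\ref{thm:GMS}, not into this purely combinatorial lemma.
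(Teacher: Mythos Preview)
Your proof is correct and follows essentially the same approach as the paper: both directions are handled directly, and for the forward direction you make the identical canonical choice $H = [d] \setminus \bigcup_{k \in F} \supp(a^k)$. Your write-up is in fact slightly more careful than the paper's (which has a couple of $[r]$/$[d]$ typos), and your remark that the nonzero-column hypothesis is not needed here is a correct additional observation.
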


\begin{proof}
  Suppose that $F$ is $A$-feasible.  
  Let $H =  [d]  \setminus \cup_{j \in F} \supp(a^j)$.
  By construction, 
  \[
  F = \{ j \in [r]: a_{ij} = 0 \mbox{ for all } i \in H \}.
  \]
  Conversely, given a set $H \subseteq  [d]$, let
  $F$ be the set 
  \[
  \{j \in [r]:  \supp(a^j)  \subseteq [r] \setminus H \}. 
  \]
  Then if $j \notin F$, $a^j \not\subseteq [r] \setminus H$
  so $a^j \not\subseteq  \bigcup_{k \in F}  \supp(a^k)$.  This
  shows that $F$ is $A$-feasible.
\end{proof}

A key fact about the standard parametrization of a graphical model is
that a natural lattice support suffices for being $A_{G}$-feasible.
\begin{lemma}\label{lem:AGfeasible}
  Let $L = J(P)$ be a natural lattice and let $G$ be a graph such that
  $G$ contains the minimal graph of~$L$.  Then $L$ is $A_G$-feasible.
\end{lemma}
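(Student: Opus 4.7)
The plan is to apply Lemma \ref{lem:Afeasible2}, which characterizes $A_G$-feasible sets as those carved out by forcing certain row indices of $A_G$ to vanish. So I will exhibit an explicit set $H$ of rows of $A_G$ with the property that $L$ is exactly the set of columns on which all rows in $H$ equal zero.

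Recall that the rows of $A_G$ are pairs $(S,T)$ with $S \in \calc(G)^{\max}$ and $T \subseteq S$, and the entry $(A_G)_{(S,T),U}$ equals $1$ iff $U \cap S = T$. By the Birkhoff correspondence, $U \in L = J(P)$ iff $U$ is an order ideal of $P$, which is equivalent to saying that no cover relation $i \gtrdot j$ in $P$ satisfies $i \in U$ and $j \notin U$. Since $G$ contains the minimal graph of $L$, every such cover $i \gtrdot j$ corresponds to an edge of $G$; fix, for each cover, a maximal clique $S^{ij} \in \calc(G)^{\max}$ containing both $i$ and $j$. The candidate is then
\[
  H = \set{(S^{ij},T) : i \gtrdot j \text{ in } P,\; T \subseteq S^{ij},\; i \in T,\; j \notin T}.
\]

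For the containment $L \subseteq \set{U : (A_G)_{(S,T),U}=0 \text{ for all }(S,T)\in H}$, suppose $U \in L$. For any row $(S^{ij},T) \in H$, the set $T$ contains $i$ but not $j$. If $i \notin U$ then $i \notin U \cap S^{ij}$ while $i \in T$, so $U \cap S^{ij} \neq T$; if $i \in U$, the order-ideal property forces $j \in U \cap S^{ij}$ while $j \notin T$, so again $U \cap S^{ij} \neq T$. Either way the entry vanishes. For the reverse containment, suppose $U \notin L$. Then $U$ is not an order ideal, so there exist $i' \in U$ and $j' \notin U$ with $j' < i'$. Choosing a saturated chain from $j'$ up to $i'$ and looking at the first index where the chain enters $U$, I obtain a cover $i \gtrdot j$ with $i \in U$ and $j \notin U$. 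Setting $T := U \cap S^{ij}$ gives $i \in T$, $j \notin T$, so $(S^{ij}, T) \in H$ and $(A_G)_{(S^{ij},T),U} = 1$, witnessing that $U$ is not in the right-hand set.

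Combining the two inclusions yields the hypothesis of Lemma \ref{lem:Afeasible2}, and therefore $L$ is $A_G$-feasible. The only mildly delicate step is the saturated-chain reduction from an arbitrary order-ideal violation $j' < i'$ to a violation occurring along a cover relation $i \gtrdot j$, which is needed in order to make use of the fact that $G$ only contains edges of the Hasse diagram and not necessarily of the full comparability graph of $P$; this matches the content of Example \ref{ex:onlyCover}, which shows why cover relations are the correct objects to track.
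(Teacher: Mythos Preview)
Your proof is correct and follows essentially the same approach as the paper: both invoke Lemma~\ref{lem:Afeasible2} with a set $H$ of row indices built from cover relations $i \gtrdot j$ of~$P$, using that $U$ fails to be an order ideal iff some cover pair has $i\in U$, $j\notin U$. The only cosmetic difference is that the paper lets $H$ range over \emph{all} maximal cliques $S$ containing a cover pair, whereas you fix one $S^{ij}$ per cover relation; your choice is slightly more economical, and your explicit verification of the two inclusions (including the saturated-chain reduction) spells out what the paper leaves implicit.
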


\begin{proof}
    We want to show that the set of column indices corresponding to the
  lattice $L$ is $A_G$-feasible.  Consider the following row indices
  \[
    H = \set{ (S,T): S \in \calc(G)^{\max},\; T \subseteq S,\; \{i,j\}
      \subseteq S \text{ for some } i \gtrdot j,\; \{i,j\} \cap T = \{i
      \} }.
  \]
  That is, $H$ consists of all $(S,T)$ row index pairs where $S$
  contains some cover relation pair $i \gtrdot j$, but $T$ only
  contains the top of that pair.  The corresponding $F$ to this $H$ is
  the lattice $L = J(P)$, because $U \in 2^{[m]} \setminus L$ if and
  only if there is some cover relation $i \gtrdot j$ such that
  $\{i,j\} \cap U = \{i\}$.  The set $H$ is as in
  Lemma~\ref{lem:Afeasible2} and thus $L$ is $A_G$-feasible.
\end{proof}

\begin{thm}\label{thm:limitfactor}
  Suppose that $p$ is a probability distribution with natural lattice
  support $L = J(P)$.  Let $G$ be a graph and suppose that $p$ is a
  limit of distributions that factor according to~$G$.  Then $p$
  factors according to~$G$.
\end{thm}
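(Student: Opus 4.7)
The plan is to combine the Geiger--Meek--Sturmfels characterization of factoring distributions (Theorem~\ref{thm:GMS}) with the $A_G$-feasibility of natural lattices (Lemma~\ref{lem:AGfeasible}). The only non-obvious input is that the hypothesis on $p$ forces $G$ to contain the minimal graph of~$L$, so that Lemma~\ref{lem:AGfeasible} actually applies.

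First, I would unpack what it means for $p$ to be a limit of distributions factoring according to~$G$. By the discussion preceding Theorem~\ref{thm:GMS}, the Zariski closure of the image of $\phi^{A_G}$ equals $V(I_G)$, so the nonnegative part $V(I_G)\cap \rr^{2^{[m]}}_{\ge 0}$ contains exactly those distributions that are limits of distributions factoring according to~$G$. Hence $p \in V(I_G)\cap \rr^{2^{[m]}}_{\ge 0}$. Since every distribution that factors according to $G$ satisfies the global (and thus pairwise) Markov statements by Theorem~\ref{thm:graphicalmodelsummary}(\ref{it:fac=>global})--(\ref{it:global=>pw}), and since the pairwise Markov statements are polynomial (in fact binomial) conditions, they are preserved under limits. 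Therefore $p$ itself satisfies ${\rm pw}(G)$.

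Next, I would invoke the results of the previous section. Because $p$ has natural lattice support $L = J(P)$ and satisfies the pairwise Markov statements of $G$, Corollary~\ref{cor:mingraph} says that the minimal graph of $L$ is a subgraph of~$G$. This is exactly the hypothesis needed to apply Lemma~\ref{lem:AGfeasible}, which then yields that $L$ is $A_G$-feasible.

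Finally, I would apply Theorem~\ref{thm:GMS} with $A = A_G$. Since $p \in V(I_{A_G})\cap \rr^{2^{[m]}}_{\ge 0}$ and $\supp(p) = L$ is $A_G$-feasible, the theorem produces a parameter vector $\theta \in \rr^d_{\ge 0}$ with $\phi^{A_G}(\theta) = p$; after normalizing, this is exactly a factorization of the form~\eqref{eq:factor}. The main conceptual step is the bridge from the hypothesis (a geometric closure condition) to the combinatorial input of Lemma~\ref{lem:AGfeasible}, and this bridge is the observation that closure conditions pass the pairwise Markov property through to $p$, after which Corollary~\ref{cor:mingraph} supplies the containment of the minimal graph in~$G$. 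No delicate estimate is needed; everything is a matching-up of hypotheses between the three results already proved.
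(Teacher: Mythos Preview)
Your proof is correct and follows essentially the same route as the paper: pass the pairwise Markov statements to the limit, invoke Corollary~\ref{cor:mingraph} to place the minimal graph of $L$ inside $G$, apply Lemma~\ref{lem:AGfeasible} to get $A_G$-feasibility of $L$, and finish with Theorem~\ref{thm:GMS}. The only difference is cosmetic---you spell out a bit more explicitly why limits inherit the pairwise Markov property.
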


\begin{proof}
  Since $p$ is a limit of distributions that factor according to $G$,
  it satisfies the pairwise Markov statements of~$G$ and hence $G$
  contains the minimal graph of~$L$ by Corollary~\ref{cor:mingraph}.
  With Theorem \ref{thm:GMS} it suffices to show that $\supp(p)$ is
  $A_G$-feasible.  This is the content of Lemma \ref{lem:AGfeasible}.
\end{proof}

A related notion to $A$-feasibility is that of being a \emph{facial}
subset.  This notion is polyhedral in nature and characterizes among
all possible supports $2^{[m]}$ those that appear among distributions
$p \in V(I_A)$.  While $A$-feasibility depends on the concrete~$A$,
being facial only depends on $I_{A}$ or the row space of~$A$.
Associated to the matrix $A$ is the polyhedral cone
\[
  \cone(A) :=  \{  Ax :  x \in \rr^r_{\geq 0}  \}.
\]
A subset $S \subseteq [r]$ is \emph{facial} if there is a face $F$ of
$\cone(A)$ such that $S = \{j \in [r] : a^{j} \in F \}$.

\begin{prop}\cite[Lemma~A.2]{GeigerMeekSturmfels2006}
  Let $A \in \nn^{d \times r}$ be an integer matrix.  A set
  $S \subseteq [r]$ is the support set for some $p \in V(I_A)$ if and
  only if $S$ is facial.
\end{prop}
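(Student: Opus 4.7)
The plan is to prove the two directions separately. The forward direction---that any facial $S$ arises as the support of some point in $V(I_A) \cap \rr^r_{\ge 0}$---is a one-parameter limit argument along a curve in the parametrization. The converse---that the support of any point of $V(I_A) \cap \rr^r_{\ge 0}$ is facial---reduces, via the defining binomials of $I_A$, to a theorem of alternatives.

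For the forward direction, suppose $S$ is facial and let $\ell \in \rr^d$ be a linear functional with $\ell \cdot a^j = 0$ for $j \in S$ and $\ell \cdot a^j > 0$ for $j \notin S$. For each $t > 0$ set $\theta(t) = (t^{\ell_1}, \ldots, t^{\ell_d}) \in \rr^d_{>0}$, so that
\[
\phi^A(\theta(t))_j = \prod_{i=1}^d t^{\ell_i a_{ij}} = t^{\ell \cdot a^j}.
\]
This equals $1$ for $j \in S$ and tends to $0$ as $t \to 0^+$ for $j \notin S$. Since each $\phi^A(\theta(t))$ lies in $V(I_A)$ and the variety is closed, the limit lies in $V(I_A) \cap \rr^r_{\ge 0}$ and has support exactly $S$.

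For the converse, assume $p \in V(I_A) \cap \rr^r_{\ge 0}$ has $\supp(p) = S$. The binomials $p^{\alpha^+} - p^{\alpha^-}$ for $\alpha \in \ker A \cap \zz^r$ all vanish on $p$, so $p^{\alpha^+} > 0 \iff p^{\alpha^-} > 0$, which translates into the combinatorial condition
\[
(*) \qquad \alpha \in \ker A \cap \zz^r,\ \alpha_j \ge 0 \text{ for all } j \notin S \;\implies\; \alpha_j = 0 \text{ for all } j \notin S.
\]
I would then invoke Motzkin's transposition theorem for the system ``$\ell \cdot a^j = 0$ for $j \in S$ and $\ell \cdot a^j > 0$ for $j \notin S$'': its feasibility over $\rr$ is equivalent, using rationality of $\ker A$, to $(*)$. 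Any solution $\ell$ cuts out a face $F = \{x \in \cone(A) : \ell \cdot x = 0\}$ of $\cone(A)$ with $\{j : a^j \in F\} = S$, so $S$ is facial.

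The main obstacle is the correct setup of Motzkin's alternatives: the coefficients $\alpha_j$ indexed by $j \in S$ must be \emph{free} while those indexed by $j \notin S$ are nonnegative with at least one strictly positive, requiring the mixed strict/non-strict form of the theorem. Rationality of $A$ is then needed to reconcile the integer condition in $(*)$ with the real-valued alternative, after which the equivalence between $(*)$ and $S$ being facial is immediate.
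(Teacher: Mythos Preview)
The paper does not prove this proposition; it simply quotes it from \cite[Lemma~A.2]{GeigerMeekSturmfels2006}. So there is no in-paper argument to compare against, and your task is really just to give a clean proof.

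Your forward direction is correct and is precisely the one-parameter limit argument that the paper itself uses a few lines later in the proof of Proposition~\ref{prop:facialimage} (there with parameter $\epsilon$ in place of your~$t$). Your converse is also the standard route. The combinatorial condition $(*)$ is exactly what the vanishing of the binomials $p^{\alpha^+}-p^{\alpha^-}$ forces, and the passage from $(*)$ to the existence of a supporting $\ell$ is indeed a mixed Motzkin/Stiemke alternative. You have already flagged the two genuine technical points: first, that the alternative must allow free signs on the $S$-coordinates and nonnegative coefficients (not all zero) on the $S^c$-coordinates; second, that the alternative produces a \emph{real} $\alpha\in\ker A$, whereas $(*)$ speaks of integer $\alpha$. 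The bridge is that $\ker A$ is a rational subspace, so the polyhedral cone $\{\alpha\in\ker A:\alpha_j\ge 0\text{ for }j\notin S\}$ is rational; any real point in it with some $\alpha_j>0$ for $j\notin S$ can be replaced by a rational, hence (after clearing denominators) integer, point with the same support pattern. One minor slip: in the converse you restrict to $p\in V(I_A)\cap\rr^r_{\ge 0}$, but the proposition as stated allows arbitrary $p\in V(I_A)$. Your argument extends verbatim by replacing ``$p^{\alpha^\pm}>0$'' with ``$p^{\alpha^\pm}\neq 0$''.
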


We now describe restrictions of a given parametrization.  Let
$S \subseteq [r]$ and denote by $\phi^{A,S}$ the map
$\rr^d \rightarrow \rr^S$ that extracts those coordinates of $\phi^A$
indexed by~$S$.  Let $\rr^S_{>0}$ denote the subset of~$\rr^r$,
consisting of all points in the coordinate subspace indexed by $S$ and
with positive coordinates there.  Then
$\phi^{A,S}(\rr^{r}_{\ge 0}) = \phi^{A}(\rr^{r}_{\ge 0}) \cap \rr^{S}$
because both just consist of points in the image of $\phi^{A}$
restricted to the coordinates in~$S$.
For any $S\subseteq 2^{[m]}$ we have
\[
  \phi^{A}(\rr^r_{\geq 0} ) \cap \rr^S_{> 0} \subseteq V(I_A) \cap
  \rr^S_{>0}
\]
typically without equality.  Restricting to positive parameters yields
an equality.
\begin{prop} \label{prop:facialimage} Let $A \in \nn^{d \times r}$ be
  an integer matrix.  Let $S$ be a facial subset.  Then
  \[
    V(I_A)  \cap  \rr^S_{>0}  =  \phi^{A,S}( \rr^d_{>0} ).
  \]
\end{prop}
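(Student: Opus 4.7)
The plan is to prove both inclusions. The easier inclusion $\phi^{A,S}(\rr^d_{>0}) \subseteq V(I_A) \cap \rr^S_{>0}$ will be verified by checking binomial generators of $I_A$ directly, using the face functional witnessing that $S$ is facial to handle binomials with support outside $S$. The reverse inclusion will be reduced, via restriction to the coordinates in $S$, to the standard fact that the strictly positive part of a toric variety equals the image of its monomial parametrization on positive parameters.

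For the forward inclusion, I would fix $\theta \in \rr^d_{>0}$ and let $q \in \rr^r$ have $q_j = \prod_i \theta_i^{a_{ij}}$ for $j \in S$ and $q_j = 0$ otherwise. Recall that $I_A$ is generated by binomials $p^u - p^v$ with $u, v \in \nn^r$ and $Au = Av$. If $\supp(u) \cup \supp(v) \subseteq S$, evaluating at $q$ yields $\prod_i \theta_i^{(Au)_i}$ on both sides, which agree. Otherwise, assume $\supp(u) \not\subseteq S$, so that $q^u = 0$; I must show $q^v = 0$ as well. Here the facial hypothesis enters: the face $F$ of $\cone(A)$ defining $S$ is cut out by a supporting linear functional $\ell$ with $\ell \ge 0$ on $\cone(A)$, $\ell(a^j) = 0$ for $j \in S$, and $\ell(a^j) > 0$ for $j \notin S$. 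Applying $\ell$ to $Au = Av$ gives $\sum_{j \notin S} u_j \ell(a^j) = \sum_{j \notin S} v_j \ell(a^j)$, and since the left side is strictly positive, some $j \notin S$ has $v_j > 0$, so $q^v = 0$.

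For the reverse inclusion, I would start from $p \in V(I_A) \cap \rr^S_{>0}$, let $A_S$ denote the $d \times |S|$ submatrix of columns indexed by $S$, and write $p_S$ for the vector of the nonzero entries of $p$. First, the restriction $p_S$ lies in $V(I_{A_S})$: any $u, v \in \nn^S$ with $A_S u = A_S v$ extend by zero to $\tilde u, \tilde v \in \nn^r$ with $A \tilde u = A \tilde v$, so the binomial $p^{\tilde u} - p^{\tilde v} \in I_A$ vanishes at $p$, and hence $p_S^u = p_S^v$. Then the standard characterization of positive points on a toric variety via logarithms, namely that $\log p_S$ lies in $(\ker A_S)^\perp$, which is the row space of $A_S$, produces $\theta \in \rr^d_{>0}$ with $p_S = \phi^{A_S}(\theta)$; extending by zeros yields $p = \phi^{A,S}(\theta)$. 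The main obstacle is the forward direction, where constructing the supporting functional $\ell$ is the crucial step and the facial hypothesis is exactly what makes this available; the reverse direction is bookkeeping together with the standard toric dictionary.
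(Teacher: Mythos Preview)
Your proof is correct and shares the same core ingredients as the paper's, but the two directions are organized differently.

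For the inclusion $V(I_A)\cap\rr^S_{>0}\subseteq\phi^{A,S}(\rr^d_{>0})$ (your ``reverse''), both arguments are essentially identical: restrict to the columns indexed by $S$, observe that the binomials of $I_{A_S}$ vanish, and use the logarithm to place $\log p_S$ in the row space of $A_S$, producing strictly positive parameters. The paper phrases this as ``solving by taking logarithms'' and cites \cite[Theorem~3.1]{GeigerMeekSturmfels2006}; your write-up makes the intermediate step $p_S\in V(I_{A_S})$ explicit.

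For the inclusion $\phi^{A,S}(\rr^d_{>0})\subseteq V(I_A)\cap\rr^S_{>0}$ (your ``forward''), the approaches diverge. The paper takes the face normal $c$ and builds the one-parameter family $p_j(\epsilon)=\epsilon^{c^Ta^j}\prod_i t_i^{a_{ij}}$, which lies in the image of $\phi^A$ for every $\epsilon>0$ and hence in $V(I_A)$; letting $\epsilon\to 0$ lands on the given point, and membership in $V(I_A)$ persists by closedness. You instead verify the binomial generators $p^u-p^v$ of $I_A$ directly, using the same face functional $\ell$ applied to the identity $Au=Av$ to show that if one monomial has support leaving $S$ then so does the other. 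Both arguments hinge on the same supporting functional; yours is a touch more elementary (no limits, no appeal to closure), while the paper's deformation makes explicit the interpretation of such points as limits of fully factoring distributions, which is the theme of the surrounding section.
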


The proof is essentially a reformulation of~\cite[Theorems~3.1
and~3.2]{GeigerMeekSturmfels2006}.

\begin{proof}
  Let $p \in V(I_{A}) \cap \rr^{S}_{>0}$.  This implies
  $\supp(p) = S$.  Consider the equations
  $p_{j} = \prod_{i=1}^{d}t_{i}^{a_{ij}}$ for $j\in S$.  This system
  can be solved with strictly positive parameters $t_{i}$ for
  $i = 1,\dotsc, d$.  This is achieved by taking logarithms and
  solving the resulting linear equations just as in the proof of
  \cite[Theorem~3.1]{GeigerMeekSturmfels2006}.  The existence of these
  strictly positive parameters shows that
  $p\in \phi^{A,S}(\rr^{d}_{>0})$.

  Conversely, let $p\in \phi^{A,S}(\rr^{d}_{>0})$ and let
  $t_{1},\dotsc t_{d}$ be the corrsponding strictly positive
  parameters. The original $p$ naturally lies in $\rr^{S}_{>0}$.
  Since $S$ is facial, we can pick a normal vector $c$ to the face $F$
  of $\cone(A)$ with $S = \set{j\in [r] : a^{j}\in F}$.  For
  arbitrarily small $\epsilon > 0$ consider the vector
  $p(\epsilon) \in \rr^{r}$ with entries
  \[
    p_{j}(\epsilon) =
    \epsilon^{c^{T}a_{j}}\prod_{i=1}^{d}t_{i}^{a_{ij}}, \qquad j\in
    [r].
  \]
  It holds that $c^{T}a_{j} = 0$ if and only if $j\in S$.  This means
  that as $\epsilon \to 0$, that $p(\epsilon)_{j} \to p_{j}$ if
  $j\in S$ and $p(\epsilon)_{j} \to 0$ if $j\notin S$.  It is easy to
  see that each $P(\epsilon)$ factors according $A$ and hence the
  limit distribution satisfies the equations in~$I_{A}$ by
  \cite[Theorem~3.2]{GeigerMeekSturmfels2006}.
\end{proof}

The proposition does not show that any facial subset is $A$-feasible
for an arbitrary~$A$ and that is also not true.  Since the facial
subsets are exactly the supports of limits of distributions that
factor, it is no surprise that $A$-feasible subsets are facial.

\begin{prop}\label{prop:feasible=>facial}
  Let $A \in \nn^{d \times r}$ be an integer matrix.  Each
  $A$-feasible $F \subseteq [r]$ is facial.
\end{prop}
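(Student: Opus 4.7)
The plan is to leverage the reformulation of $A$-feasibility given in Lemma~\ref{lem:Afeasible2} and then explicitly construct a supporting linear functional whose vanishing face picks out exactly $F$. Given $F$ that is $A$-feasible, Lemma~\ref{lem:Afeasible2} provides a set $H \subseteq [d]$ with
\[
  F = \{ j \in [r] : a_{ij} = 0 \text{ for all } i \in H \}.
\]
This is the crucial first step, because it converts the ``not contained in a union of supports'' condition into a clean linear-algebraic description in terms of vanishing of coordinates.

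Next, I would define the linear functional $c \in \rr^d$ by $c_i = 1$ if $i \in H$ and $c_i = 0$ otherwise. Since $A \in \nn^{d \times r}$ has nonnegative entries, for every column $a^j$ we have
\[
  c^T a^j = \sum_{i \in H} a_{ij} \geq 0.
\]
Thus $c$ is nonnegative on every generator of $\cone(A)$, hence nonnegative on the entire cone, so the hyperplane $\{x \in \rr^d : c^T x = 0\}$ is a supporting hyperplane of $\cone(A)$, and
\[
  \mathcal{F} := \{ x \in \cone(A) : c^T x = 0 \}
\]
is a face of $\cone(A)$.

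To conclude, I would verify that the columns lying in $\mathcal{F}$ are exactly those indexed by $F$. A column $a^j$ lies in $\mathcal{F}$ iff $c^T a^j = 0$, i.e., $\sum_{i \in H} a_{ij} = 0$; by nonnegativity of the entries this is equivalent to $a_{ij} = 0$ for all $i \in H$, which is exactly the condition $j \in F$ from Lemma~\ref{lem:Afeasible2}. Hence $F = \{j \in [r] : a^j \in \mathcal{F}\}$, proving $F$ is facial.

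There is no serious obstacle here once one invokes Lemma~\ref{lem:Afeasible2}; the whole argument rests on two simple facts: the columns satisfying $c^T a^j = 0$ are exactly those vanishing on $H$, and nonnegativity of $A$ turns a \emph{single} $0/1$-valued functional into a valid supporting hyperplane. The step most in need of care is verifying that the constructed $\mathcal{F}$ really is a face of $\cone(A)$ (as opposed to merely a subset), but this is immediate from $c$ being nonnegative on the cone.
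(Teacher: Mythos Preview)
Your proof is correct and essentially identical to the paper's: both invoke Lemma~\ref{lem:Afeasible2} to obtain $H$, take $c = \sum_{i \in H} e_i$, and use nonnegativity of $A$ to conclude that $c^T a^j \geq 0$ always, with equality exactly for $j \in F$.
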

\begin{proof}
  Let $F$ be $A$-feasible.  We must show that there is a linear
  functional $c$ such that $c^T a^j \geq 0$ for all $j \in [r]$ and
  $F = \{ j \in [r] : c^T a^j = 0 \}$.  Let $H \subset [d]$ be as in
  Lemma~\ref{lem:Afeasible2} and $c = \sum_{i \in H} e_i$ (where $e_i$
  denotes the standard unit vector).  Since $A$ is a matrix of
  nonnegative integers, we must have that $c^T a^j \geq 0$ for
  all~$j$.  On the other hand, by the relation between $F$ and $H$, we
  it follows that $c^Ta^j = 0$ if and only if $j \in F$.  Thus $F$ is
  a facial subset.
\end{proof}

To sum up, the relation between being $A$-feasible and being facial is
as follows: If some $A$ is fixed, every $A$-feasible set is facial but
not the other way around.  However, for every facial set, there exists
an $A'$ with the same rowspace as $A$ so that the given set is
$A'$-feasible.  In fact, for each $A$ there exists a single $A'$
(typically with many more rows than $A$) with the same row space and
thus the same toric ideal, but for which every facial subset is
$A'$-feasible.

The following example illustrates the concepts in this section and
contains a facial subset that is not $A_G$-feasible in the standard
paramatrization of the graphical model of the four-cycle.
\begin{ex}\label{ex:fourcycleFeasible}
  Consider again the graphical model of the four-cycle with edges
  $12, 23, 34, 14$ from Example~\ref{ex:fourCycleParam}.  The matrix
  $A_{G}$ is as follows:
  \[
    A_G = 
    \begin{blockarray}{ccccccccccccccccc}
      & \emptyset & 4 & 3 & 34 & 2 & 24 & 23 & 234 & 1 & 
      14 & 13 & 134 & 12 & 124 & 123 & 1234 \\ 
      \begin{block}{l(cccccccccccccccc)}
        (12, \emptyset)     & 1 & 1 & 1 & 1 & 0 & 0 & 0 & 0 & 0 & 0 & 0 & 0 & 0 & 0 & 0 & 0 \\
        (12, 2) & 0 & 0 & 0 & 0 & 1 & 1 & 1 & 1 & 0 & 0 & 0 & 0 & 0 & 0 & 0 & 0 \\
        (12, 1) &    0 & 0 & 0 & 0 & 0 & 0 & 0 & 0 & 1 & 1 & 1 & 1 & 0 & 0 & 0 & 0 \\
        (12, 12) &     0 & 0 & 0 & 0 & 0 & 0 & 0 & 0 & 0 & 0 & 0 & 0 & 1 & 1 & 1 & 1 \\
        (23, \emptyset) &  1 & 1 & 0 & 0 & 0 & 0 & 0 & 0 & 1 & 1 & 0 & 0 & 0 & 0 & 0 & 0 \\
        (23,3) &     0 & 0 & 1 & 1 & 0 & 0 & 0 & 0 & 0 & 0 & 1 & 1 & 0 & 0 & 0 & 0 \\
        (23,2) &     0 & 0 & 0 & 0 & 1 & 1 & 0 & 0 & 0 & 0 & 0 & 0 & 1 & 1 & 0 & 0 \\
        (23,23) &      0 & 0 & 0 & 0 & 0 & 0 & 1 & 1 & 0 & 0 & 0 & 0 & 0 & 0 & 1 & 1 \\
        (34,\emptyset) &     1 & 0 & 0 & 0 & 1 & 0 & 0 & 0 & 1 & 0 & 0 & 0 & 1 & 0 & 0 & 0 \\
        (34,4) &     0 & 1 & 0 & 0 & 0 & 1 & 0 & 0 & 0 & 1 & 0 & 0 & 0 & 1 & 0 & 0 \\
        (34,3) &      0 & 0 & 1 & 0 & 0 & 0 & 1 & 0 & 0 & 0 & 1 & 0 & 0 & 0 & 1 & 0 \\
        (34,34) &      0 & 0 & 0 & 1 & 0 & 0 & 0 & 1 & 0 & 0 & 0 & 1 & 0 & 0 & 0 & 1 \\
        (14,\emptyset) & 1 & 0 & 1 & 0 & 1 & 0 & 1 & 0 & 0 & 0 & 0 & 0 & 0 & 0 & 0 & 0 \\
        (14,4) &      0 & 1 & 0 & 1 & 0 & 1 & 0 & 1 & 0 & 0 & 0 & 0 & 0 & 0 & 0 & 0 \\
        (14,1) &      0 & 0 & 0 & 0 & 0 & 0 & 0 & 0 & 1 & 0 & 1 & 0 & 1 & 0 & 1 & 0 \\
        (14,14) &      0 & 0 & 0 & 0 & 0 & 0 & 0 & 0 & 0 & 1 & 0 & 1 & 0 & 1 & 0 & 1\\
      \end{block}
    \end{blockarray}
  \]
  To decide if a given support is facial or not, one needs to compute
  the face lattice of the cone over the columns of the matrix~$A_{G}$.
  Already for this small example, this is a rather complicated
  9-dimensional cone.  Its $f$-vector contains the number of faces per
  dimension and is as follows
  \[
    {1, 16, 104, 360, 712, 816, 520, 168, 24, 1}.
  \]
  This means that there are 16 rays (spanned by the 16 columns
  of~$A_{G}$), 104 two-dimensional faces and so on.  The face
  structure can in fact be described explicitly for any graph that is
  free of $K_{4}$-minors, see~\cite[Theorem~8.2.10]{ASBookSeth}.
  Using this description or software for polyhedral geometry one can
  compute that, for example,
  \[
    S = \set {\emptyset, 1, 3, 13, 34, 123, 134, 1234}
  \]
  is facial for the graphical model, meaning that there are
  distributions with this support in $V(I_{A_{G}})$.  Checking if $S$
  is $A_{G}$-feasible, is a combinatorial procedure using the
  matrix~$A_{G}$.  We consider $\bigcup_{k\in S}\supp(a^{k})$ which
  equals a column of ones with only three zeros, namely in the rows
  $(12,2)$ and $(23,2)$, and $(34,4)$.  In fact, $S$ is a natural
  lattice, depicted in Figure~\ref{fig:posetgraph}.  More details on
  this poset appear in Example~\ref{ex:master}.  This is what the
  proof of Lemma~\ref{lem:AGfeasible} dictates: each covering relation
  in $P$ yields some zeroes in the union of the supports.  Now, all
  columns $a^{k}$ for $k\notin S$ have one entry in one of the three
  rows.  For example, the column labeled $4$ in $(34,4)$ and the
  column labeled $12$ has it in $(23,2)$.

  A set that is facial but not $A_{G}$-feasible can be found by
  examining the facets of $\cone(A_{G})$.  As can be checked
  computationally, the eight rays corresponding
  \[
    T = \set{\emptyset, 4, 3, 23, 14, 124, 123, 1234}
  \]
  span an 8-dimensional facet of $\cone (A)$, meaning that it is a
  facial set.  On the other hand, the union $\bigcup_{k\in T}a^{k}$
  consists of all 16 rows, showing that $T$ is not $A_{G}$-feasible.

  Finally, the matrix $B_{G}$ of the alternative parametrization looks
  as follows.
  \[B_{G} = 
    \begin{blockarray}{ccccccccccccccccc}
      & \emptyset & 4 & 3 & 34 & 2 & 24 & 23 & 234 & 1 & 
      14 & 13 & 134 & 12 & 124 & 123 & 1234 \\ 
      \begin{block}{l(cccccccccccccccc)}
       \emptyset &1&1&1&1&1&1&1&1&1&1&1&1&1&1&1&1\\
       1 &0&0&0&0&0&0&0&0&1&1&1&1&1&1&1&1\\
       2 &0&0&0&0&1&1&1&1&0&0&0&0&1&1&1&1\\
       3 &0&0&1&1&0&0&1&1&0&0&1&1&0&0&1&1\\
       4 &0&1&0&1&0&1&0&1&0&1&0&1&0&1&0&1\\
       12 &0&0&0&0&0&0&0&0&0&0&0&0&1&1&1&1\\
       23 &0&0&0&0&0&0&1&1&0&0&0&0&0&0&1&1\\
       34 &0&0&0&1&0&0&0&1&0&0&0&1&0&0&0&1\\
       14 &0&0&0&0&0&0&0&0&0&1&0&1&0&1&0&1\\
      \end{block}
    \end{blockarray}
  \]
  The cone over the columns of this matrix is the same 9-dimensional
  cone as above, just embedded as a full-dimensional cone in $\rr^9$
  now.  There are much fewer $B_{G}$-feasible sets now.  The union
  $\bigcup_{k\in S} \supp(b^{k})$ with $S$ from above contains all
  rows.  Consequently, this $S$ is not $B_{G}$-feasible.
\end{ex}


\section{The natural lattice supported Hammersley-Clifford theorem}

\newcommand{\VfacL}{V_{\mathrm{fac}(G), L}}
\newcommand{\VpwL}{V_{\mathrm{pw}(G), L}}

The following theorem shows that having a natural lattice support can
replace the condition of full support in the Hammersley-Clifford
theorem (Theorem~\ref{thm:graphicalmodelsummary}~(3)).  The statement
holds independent of any assumptions on the graph~$G$.  The proof
strategy also provides an alternative and more algebraic proof of the
classical Hammersley-Clifford theorem as a special case, taking the
distributive lattice $L$ to be the complete Boolean lattice~$2^{[m]}$.

\begin{thm}\label{thm:localtofactor}
  If $p$ is a distribution with natural lattice support $L = J(P)$,
  and $p$ satisfies the pairwise Markov statements of a graph~$G$,
  then $p$ factors according to the graph~$G$.
\end{thm}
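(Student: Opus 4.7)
The plan is to prove $p \in V(I_G)$ and then invoke Theorem~\ref{thm:limitfactor}, which will yield that $p$ factors according to $G$. Combining the hypothesis with Corollary~\ref{cor:mingraph} shows that $G$ contains the minimal graph of $L$, so by Lemma~\ref{lem:AGfeasible} the support $L$ is $A_G$-feasible.

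To verify $p \in V(I_G)$, I would check that every binomial $p^a - p^b \in I_G$ vanishes at $p$, splitting into two cases. If $\supp(a) \not\subseteq L$, pick $j \in \supp(a) \setminus L$; by $A_G$-feasibility there is a row of $A_G$ that is nonzero in column $j$ but zero in every column indexed by $L$. The relation $A_G a = A_G b$ at that row, combined with the nonnegativity of $A_G$, forces $\supp(b) \not\subseteq L$ as well, so both monomials contain a factor $p_R = 0$ and the binomial is trivially satisfied at $p$. If instead $\supp(a), \supp(b) \subseteq L$, the problem reduces to showing that $p|_L \in V(I_G) \cap \rr^L_{>0}$. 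By Propositions~\ref{prop:feasible=>facial} and~\ref{prop:facialimage} this is equivalent to $p|_L \in \phi^{A_G, L}(\rr^d_{>0})$, i.e., after taking logarithms, to the vector $(\log p_R)_{R \in L}$ lying in the row span of the submatrix of $A_G$ whose columns are indexed by~$L$.

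The heart of the proof is to establish this membership from pairwise Markov, which I would do by adapting the classical Möbius-inversion proof of the Hammersley-Clifford theorem to the sublattice~$L$. For each clique $S \in \calc(G)$ define an exponent $\gamma_S$ as an alternating sum of $\log p_T$ over suitable $T \in L$, and then show that $\log p_R = \sum_{S \in \calc(G),\, S \subseteq R} \gamma_S$ for all $R \in L$, with the cancellation across non-cliques supplied by pairwise Markov. What makes this possible is a structural fact provided by naturality: whenever $ij$ is a non-edge of $G$ and $R \in L$ contains both $i$ and $j$ as maximal elements of $R$, the rank-two interval $[R \setminus \{i,j\}, R]$ in $L$ is Boolean on $\{i,j\}$ (since two elements that are both maximal in an order ideal of $P$ must be incomparable in $P$), so all four of $R\setminus\{i,j\}, R\setminus i, R\setminus j, R$ lie in $L$, and the corresponding pairwise Markov binomial provides a genuine linear relation among the positive quantities $\log p_R, \log p_{R\setminus i}, \log p_{R\setminus j}, \log p_{R\setminus\{i,j\}}$.

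The main obstacle is the Möbius bookkeeping for cliques $S$ that are not themselves elements of $L$: for such $S$ the quantity $\log p_S$ is not directly available, and one must combine logarithms of several elements of $L$ to build $\gamma_S$ and check that the resulting sum telescopes correctly when summed over $S \subseteq R$. Specializing to $L = 2^{[m]}$ removes all support restrictions and reduces the construction to classical Möbius inversion on the Boolean lattice, thereby recovering the classical Hammersley-Clifford theorem as a special case.
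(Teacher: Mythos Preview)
Your setup matches the paper exactly: reduce to showing $p\in V(I_G)$, use Corollary~\ref{cor:mingraph} and Lemma~\ref{lem:AGfeasible} to get $A_G$-feasibility, then Propositions~\ref{prop:feasible=>facial} and~\ref{prop:facialimage} to reduce to $p|_L\in\phi^{A_G,L}(\rr^d_{>0})$. Your structural observation---that for a non-edge $ij$ with $i,j$ both maximal in $R\in L$ the interval $[R\setminus\{i,j\},R]$ is Boolean inside $L$---is precisely the engine of the paper's Lemma~\ref{lem:dependent}.

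The gap is the step you yourself flag as ``the main obstacle.'' A direct M\"obius-inversion argument requires defining $\gamma_S$ for every clique $S\in\calc(G)$ as an alternating sum of $\log p_T$; but for $T\subseteq S$ with $T\notin L$ there is no $\log p_T$ available, and you give no concrete substitute. Saying one must ``combine logarithms of several elements of $L$'' and ``check that the resulting sum telescopes correctly'' is not a proof---it is a restatement of the difficulty. Without a specific construction of $\gamma_S$ (or an argument that such $\gamma_S$ exist), the heart of the proof is missing.

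The paper avoids this obstacle entirely by a different mechanism. Rather than M\"obius inversion over $\calc(G)$, it passes to the set $\overline{\calc(G)}=\{\overline{S}:S\in\calc(G)\}\subseteq L$ of order-ideal closures of cliques. Lemma~\ref{lem:independent} shows, via a triangular ordering induced by a linear extension of $P$, that the coordinates $\{p_S:S\in\overline{\calc(G)}\}$ are algebraically independent in the alternative parametrization~\eqref{eq:allCliqueParam}, so one may solve for the parameters $c_C$ (with $\overline{C}=S$) sequentially. Lemma~\ref{lem:dependent} then uses exactly your Boolean-interval observation to show that each remaining $p_S$ with $S\in L\setminus\overline{\calc(G)}$ is determined by a pairwise Markov binomial $p_Sp_{S\setminus\{i,j\}}=p_{S\setminus i}p_{S\setminus j}$, where $i,j$ are a non-edge among the maximal elements of $S$. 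This gives $p_S$ in terms of strictly smaller elements of $L$, and the triangular structure finishes the argument without ever needing an explicit closed formula for the $\gamma_S$. The key idea you are missing is to replace $\calc(G)$ by $\overline{\calc(G)}$ and argue inductively along $L$ rather than via M\"obius inversion on the Boolean lattice.
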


To prove the theorem, we compare the two sets of probability
distributions.  Let $\rr^L$ to denote the coordinate subspace
$\mathrm{span}(e_S : S \in L)$, where $e_S$ denotes the standard unit
vector.  Then $\rr^L_{> 0}$ denotes the part of that coordinate
subspace where $p_S > 0$ for $S \in L$.  The set of distributions
supported on $L$ and satisfying the pairwise Markov statements of~$G$
can be written as
\[
\VpwL : = V(I_{\pw(G)} + \langle p_S : S \notin L \rangle ) \cap \rr^L_{> 0}. 
\]
The second set of interest is the set of distributions that have
lattice support $L$ and factorize according to the graph $G$.  These
are described as follows:
\[
  \VfacL := V(I_G) \cap \rr^L_{> 0}.
\]
As just defined, the set $\VfacL$ consists of all distributions that
\emph{are limits of distributions that factor} and have the
support~$L$.  However, according to Theorem~\ref{thm:limitfactor} any
distribution with natural lattice support that is the limit of ones
that factor actually does factor.  Therefore $\VfacL$ is the set that
Theorem~\ref{thm:localtofactor} talks about.

By Theorem~\ref{thm:graphicalmodelsummary}~(\ref{it:fac=>global})
and~(\ref{it:global=>pw}) any distribution that factors according to
$G$ satisfies all Markov conditions of~$G$.  In terms of our sets we
have
\begin{equation*}
  \VfacL \subseteq \VpwL.
\end{equation*}
Our goal is to prove that these two sets are equal if $L$ is a natural
distributive lattice.  To this end we use the alternative
factorization \eqref{eq:allCliqueParam} and identify algebraically
independent parameters in it.  These insights can then be transferred
to the classical factorization \eqref{eq:factor} according to~$G$.

The key insight to identify independent parameters in
\eqref{eq:allCliqueParam} is to consider the order ideal closures (as
elements of $L = \supp(p)$) of the cliques $\calc(G)$.  For each set
$S\subset [m]$, we let $\overline{S} \in L$ be the order ideal
generated by $S$ in~$P$.  Write $\overline{\calc(G)}$ for
$\set{ \overline{S} \in L : S \in \calc(G)}$ of order ideals in $L$
generated by all cliques of~$G$.  The mapping $S\mapsto \overline{S}$
is generally not one-to-one and hence $\overline{\calc(G)}$ has
potentially fewer elements than~$\calc(G)$.  See
Example~\ref{ex:master} for an explicit computation using the
four-cycle.

The elements $S\in L = \supp(p)$ correspond to variables~$p_{S}$.  In
the next two lemmas we show that the variables $p_{S}$ for
$S\in \overline{\calc(G)}$ are algebraically independent while the
remaining ones are dependent through simple binomials coming from
pairwise Markov statements.

\begin{lemma}\label{lem:independent}
  The set of variables $\{p_S : S \in \overline{\calc(G)}\}$ is
  algebraically independent with respect to~$\VfacL$.  In particular,
  $\dim \VfacL \ge \#\overline{\calc(G)}$.
\end{lemma}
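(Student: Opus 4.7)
The plan is to reduce the algebraic independence claim to the linear independence of a concrete integer matrix, and then to exhibit that matrix as triangular via the order ideal structure on $L$. First, Lemma~\ref{lem:AGfeasible} gives that $L$ is $A_G$-feasible, hence facial by Proposition~\ref{prop:feasible=>facial}, so Proposition~\ref{prop:facialimage} identifies $\VfacL$ with $\phi^{A_G, L}(\rr^d_{>0})$: every $p \in \VfacL$ arises from strictly positive parameters under the standard parametrization, restricted to the $L$-coordinates. Each coordinate $p_R$ for $R \in L$ is therefore a positive monomial in the parameters, so algebraic independence of $\{p_R : R \in \overline{\calc(G)}\}$ on $\VfacL$ is equivalent to the linear independence of the exponent vectors of these monomials, i.e., of the columns of $A_G$ indexed by $\overline{\calc(G)}$. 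Since $\ker A_G = \ker B_G$, this is in turn equivalent to the analogous statement for $B_G$, whose simpler combinatorial form is more convenient.

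For each $R \in \overline{\calc(G)}$, choose a clique $S_R \in \calc(G)$ with $\overline{S_R} = R$; such a clique exists by the very definition of $\overline{\calc(G)}$. Enumerate $\overline{\calc(G)} = \{R_1, \ldots, R_k\}$ by any linear extension of reverse inclusion, so that $R_i \subsetneq R_j$ forces $i > j$. Let $M$ be the $k \times k$ submatrix of $B_G$ with rows indexed by $S_{R_1}, \ldots, S_{R_k}$ and columns indexed by $R_1, \ldots, R_k$, so that $M_{i,j} = 1$ if $S_{R_i} \subseteq R_j$ and $M_{i,j} = 0$ otherwise.

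The diagonal is all ones since $S_{R_i} \subseteq \overline{S_{R_i}} = R_i$. For $j > i$, if one had $S_{R_i} \subseteq R_j$, then since $R_j \in L = J(P)$ is an order ideal, $R_i = \overline{S_{R_i}} \subseteq R_j$; as $R_i \neq R_j$ this would force $R_i \subsetneq R_j$, contradicting the ordering. Hence $M$ is lower triangular with unit diagonal, so $\det M = 1$, and the $k$ columns of $B_G$ indexed by $\overline{\calc(G)}$ are linearly independent. The dimension bound $\dim \VfacL \geq \#\overline{\calc(G)}$ follows immediately, since the coordinate projection $\VfacL \to \rr^{\overline{\calc(G)}}$ is dominant. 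I do not anticipate a serious obstacle; the main care is in wiring together the facial and feasibility results so that $\VfacL$ carries a genuine positive-monomial parametrization to which the exponent-vector criterion for algebraic independence applies — the triangular argument itself is then purely combinatorial.
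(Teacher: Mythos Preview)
Your proof is correct and follows essentially the same approach as the paper: both arguments hinge on the observation that if $C \in \calc(G)$ with $\overline{C} = R$ and $C \subseteq R'$ for some order ideal $R'$, then $R \subseteq R'$, which yields a triangular structure when the elements of $\overline{\calc(G)}$ are ordered compatibly with inclusion. The paper phrases this as ``each monomial contains a new parameter $c_C$'' rather than extracting a triangular submatrix of $B_G$, and it leaves the facial/feasible setup implicit, but the content is the same.
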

\begin{proof}
  It suffices to show that the monomials
  \[
    \mathcal{A} = \set{ \prod_{T \subseteq S : T \in \calc(G)} c_T : S \in
      \overline{\calc(G)} },
  \]
  appearing in \eqref{eq:allCliqueParam}, are algebraically
  independent.  Fix any linear extension of the poset~$P$.  Then
  $\mathcal{A}$ can be totally ordered so that each monomial has a
  parameter $c_T$, which has not appeared in any of the monomials
  prior to it in the ordering.  Specifically, if $S = \overline{C} $
  where $C \in \calc(G)$, then the parameter $c_C$ appears in
  $\prod_{T \subseteq S : T \in \calc(G)} c_T$, but does not appear in
  any monomial before it.  This shows that $\mathcal{A}$ is
  algebraically independent.  Given the independence, the dimension
  bound follows because generally the dimension of a variety is
  bounded from below by the cardinality of an algebraically
  independent set of coordinates on that variety.
\end{proof}

\begin{lemma}\label{lem:dependent}
  Modulo the binomials of the pairwise Markov condition of~$G$, each
  variable $p_{S}$ for $S\in L\setminus \overline{\calc(G)}$ is
  dependent on the variables
  $\mathcal{A} = \{p_{T} : T\in \overline{\calc(G)} \}$.  Thus
  $\dim \VpwL \le \#\overline{\calc(G)}$.
\end{lemma}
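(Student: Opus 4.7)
The plan is to prove by induction on $\#S$ that, for every $S \in L$, the variable $p_S$ is congruent modulo $I_{\pw(G)}$ to a rational function in the variables $\mathcal{A} = \{p_T : T \in \overline{\calc(G)}\}$ that is well-defined on $\VpwL$. The base case $S = \emptyset$ is trivial since $\emptyset \in \calc(G)$ and $\overline{\emptyset} = \emptyset \in \overline{\calc(G)}$, so $p_\emptyset \in \mathcal{A}$. For the inductive step, fix $S \in L$ with $S \notin \overline{\calc(G)}$, and let $\max(S) \subseteq [m]$ denote the set of elements of $S$ that are maximal in the induced subposet of~$P$. Since $\overline{\max(S)} = S$, if $\max(S)$ were a clique of~$G$ then $S \in \overline{\calc(G)}$, contradicting our assumption. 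Hence $\max(S)$ contains two distinct vertices $i, j$ with $(i,j) \notin E$.

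Next invoke the pairwise Markov binomial for the non-edge $(i,j)$ with $C = S \setminus \{i,j\}$:
\[
p_{S \setminus \{i,j\}}\, p_S - p_{S \setminus \{i\}}\, p_{S \setminus \{j\}} \;\in\; I_{\pw(G)}.
\]
The crucial combinatorial check is that all four of $S$, $S \setminus \{i\}$, $S \setminus \{j\}$, $S \setminus \{i,j\}$ lie in $L$. By Lemma~\ref{lem:usenatural}, naturality of $L$ ensures that $P$ is a poset on $[m]$, so these are subsets of $[m]$; and because $i$ and $j$ are maximal elements of $S$ in~$P$, peeling either or both off still leaves an order ideal of~$P$. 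Since $p_{S \setminus \{i,j\}} > 0$ on $\VpwL$, one solves
\[
p_S \;=\; \frac{p_{S \setminus \{i\}}\, p_{S \setminus \{j\}}}{p_{S \setminus \{i,j\}}}.
\]
All three sets on the right have strictly smaller cardinality than $\#S$, so by the induction hypothesis each of the corresponding $p$-variables is expressible as a rational function of $\mathcal{A}$ on $\VpwL$; substituting gives the same for $p_S$. Clearing denominators yields a polynomial relation certifying algebraic dependence of $p_S$ on $\mathcal{A}$ modulo $I_{\pw(G)}$.

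The dimension bound $\dim \VpwL \le \#\overline{\calc(G)}$ then drops out: every coordinate of $\VpwL$ is algebraically dependent on the $\#\overline{\calc(G)}$ coordinates indexed by $\overline{\calc(G)}$, so the coordinate projection $\VpwL \to \rr^{\overline{\calc(G)}}$ has generically finite fibers. I expect the main subtlety to be the combinatorial step of locating a non-edge inside $\max(S)$, which rests on the observation that subsets of cliques are cliques, together with the identity $\overline{\max(S)} = S$, so that $S \notin \overline{\calc(G)}$ forces $\max(S)$ itself to fail to be a clique. The other delicate point is verifying that all four sets in the Markov binomial remain in~$L$, which is precisely where naturality of~$L$ is used and which explains why the classical Hammersley-Clifford argument (the case $L = 2^{[m]}$) falls out as a special case.
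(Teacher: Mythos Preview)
Your proof is correct and follows essentially the same approach as the paper: both locate a non-edge $\{i,j\}$ inside the set of maximal elements of any $S \in L \setminus \overline{\calc(G)}$ and use the resulting pairwise Markov binomial to reduce $p_S$ to strictly smaller order ideals. The only difference is bookkeeping for the dimension bound---the paper collects the binomials $f_S$, orders them by a linear extension of~$L$, and invokes \cite[Theorem~2.1]{EisenbudSturmfels1996} to see they form a complete intersection in the Laurent ring, whereas you unwind the same reduction inductively to exhibit each $p_S$ as an explicit rational function of~$\mathcal{A}$ and read off the bound from the (in fact injective, not merely generically finite) coordinate projection to~$\rr^{\overline{\calc(G)}}$.
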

\begin{proof}
  Given any $S\in L\setminus\overline{\calc(G)}$, let $M$ be its set
  of maximal elements, defined as those $i\in S$ such that there is no
  $j\in S$ with $j>i$.  It follows that $\overline{M} = S$.  The set
  $M$ cannot be a clique in~$G$, since then
  $\overline{M} = S\in \overline{\calc(G)}$, which is a contradiction.
  Thus, there is a pair $i,j \in M$ such that $i-j$ is not an edge
  of~$G$.
  Since $i,j$ are maximal in $S$, the four sets, $S$,
  $S \setminus \{i\}$, $S \setminus \{j\}$, and $S \setminus \{i,j\}$
  are all order ideals in~$P$.  Hence the binomial
  \begin{equation}\label{eq:computePS}
    f_S = p_S p_{S \setminus \{i,j\}} - p_{S \setminus \{i\}}p_{S
      \setminus \{j\}},
   \end{equation}
   associated to the CI statement $i \ind j | [m] \setminus \{i,j\}$,
   is a nonzero binomial in
  \[
    I_{\pw(G)} + \langle p_S : S \notin L \rangle.
  \]
  Now let $F = \{f_S : S \in L \setminus \overline{\calc(G)} \}$.
  Pick any linear extension of~$L$ and consider the total order
  induced on~$F$.  In that order each $f_S$ introduces an
  indeterminate $p_S$ that does not appear in any of the
  previous~$f_T$.  Since the $f_{S}$ are binomials, they form a
  complete intersection in the Laurent ring where all $p_{S}$ are
  invertible.  By~\cite[Theorem~2.1]{EisenbudSturmfels1996}
  \[
    \dim(V(\{f_S : S \in L \setminus \overline{\calc(G)} \}) \cap \rr_{>
      0}^L) = \#\overline{\calc(G)}.
  \]
  Since
  $\{f_S : S \in L \setminus \overline{\calc(G)} \} \subseteq
  I_{\mathrm{pw}(G)}$, we see that $\dim \VpwL \le \#\overline{\calc(G)}$.
\end{proof}

\begin{proof}[Proof of Theorem~\ref{thm:localtofactor}]
  If $p$ has natural lattice support and satisfies the pairwise Markov
  condition, then $G$ contains the minimal graph of~$L$ by
  Corollary~\ref{cor:mingraph}.  By Lemma~\ref{lem:AGfeasible}, $L$ is
  $A_{G}$-feasible and in particular facial by
  Proposition~\ref{prop:feasible=>facial}.  The only thing that
  remains to show is that $p \in V(I_{A_{G}})$.
  For this we use the alternative parametrization in
  Proposition~\ref{prop:facialimage}.  Denote the restricted
  alternative parametrization as
  $\phi^{B,L} \colon \rr^{d} \to \rr^{L}$.  For each
  $S \in \overline{\calc(G)}$ there is a $C$ such that
  $S = \overline{C}$.  By the independence in
  Lemma~\ref{lem:independent}, the parameters $c_{C} > 0$ can be
  picked, so that the $p_{S}$ have the given values.  Since
  $p \in \VpwL$, Lemma~\ref{lem:dependent} ensures that the parameter
  choices just made, yield the correct values for $p_{S}$ with
  $S\in L\setminus\overline{\calc(G)}$ since by \eqref{eq:computePS} they can be computed as 
  \[
  p_S = \frac{p_{S\setminus \{i\}} p_{S\setminus\{j\}}}{p_{S\setminus \{i,j\}}}.
  \]     
  In total, it follows that
  $p\in \phi^{B,L}(\rr^{d}_{>0})$.  Now
  Proposition~\ref{prop:facialimage} shows that $p\in V(I_{B_{G}})$,
  but $V(I_{A_{G}}) = V(I_{B_{G}})$, which concludes the proof.
\end{proof}

\begin{rmk} Theorem~\ref{thm:localtofactor} can also be proven using
  the dimension bounds.  From $\VfacL \subseteq \VpwL$ and
  Lemmas~\ref{lem:independent} and~\ref{lem:dependent} it follows that
  $\dim \VfacL = \dim \VpwL = \#\overline{\calc(G)}$.  Now it suffices
  to show that (the Zariski closure of) $\VpwL$ is an irreducible
  variety because such a variety cannot properly contain another
  variety of the same dimension.
  To see the irreducibility, one can show that $\VpwL$ is cut out by a
  prime ideal.  Consider its ideal
  \[
    I' = (I_{\pw}(G) + \langle p_S : S \notin L \rangle) :
    \left(\prod_{S\in L}p_{S}\right)^{\infty}.
  \]
  In the notation of \cite{EisenbudSturmfels1996} this is a radical
  cellular ideal corresponding to the ``cell'' $(\cc^{*})^{L}$.  Now
  apply \cite[Corollary~2.5]{EisenbudSturmfels1996} to the ideal
  $I_{L} = I'\cap \cc[p_{S} : S\in L]$.  This ideal is a lattice ideal
  and its lattice is saturated as one can show using the ordering of
  binomials in the proof of Lemma~\ref{lem:dependent}.  Hence the
  ideal of $\VpwL$ is a prime ideal and $\VpwL$ is irreducible.
\end{rmk}

The following example illustrates all this using the $4$-cycle graph
from Examples~\ref{ex:fourCycleParam} and~\ref{ex:fourcycleFeasible}.
\begin{ex}\label{ex:master}
  Consider the poset on $[4]$ with cover relations $2\gtrdot 1$,
  $2\gtrdot 3$ and $4\gtrdot 3$.  It is depicted on the left in
  Figure~\ref{fig:posetgraph}.  The lattice $L = J(P)$ of order ideals
  is printed in the middle of that figure.  Let $G$ be the four-cycle
  on the right of the figure.  Its cliques $\calc(G)$ are the empty
  set, the four vertices and the four edges.  We have seen in
  Example~\ref{ex:fourcycleFeasible} that $L$ is $A_{G}$-feasible and
  facial for the four-cycle model.  We construct the set
  $\overline{\calc(G)}$ of order ideal closures of cliques:
  \[
    \overline{\calc(G)} = \set{\emptyset, 1, 3, 34, 123, 134}.
  \]
  The remaining order ideal closures all agree with one in the set,
  because $\overline{4} = 34$, $\overline{14}=134$ and
  $\overline{2} = \overline{12} = \overline{23} = 123$.
  Restricting the parametrization \eqref{eq:allCliqueParam} to $L$
  yields six values that are independent:
  \begin{gather*}
    p_{\emptyset} = \underline{c_{\emptyset}},\quad
    p_{1} = c_{\emptyset}\underline{c_{1}},\quad
    p_{3} = c_{\emptyset}\underline{c_{3}},\quad
    p_{34} = c_{\emptyset}c_{3}\underline{c_{4}}\underline{c_{34}},\\
    p_{123} = c_{\emptyset}c_{1}c_{3}\underline{c_{2}}\underline{c_{13}}\underline{c_{23}},\quad
    p_{134} = c_{\emptyset}c_{1}c_{2}\underline{c_{4}}c_{12}\underline{c_{14}}.
  \end{gather*}
  The underlined parameters can be chosen freely and this shows in
  particular that $\dim \VfacL \ge 6$.  If a particular $p$ shall be
  parametrized, they can be chosen in order to construct a preimage
  under the restricted parametrization.  We have $\# L = 8$ and there
  are two remaining probabilities $p_{13}$ and $p_{1234}$ that are not
  independent but satisfy binomial relations coming from the pairwise
  Markov property.  For $G$ we have the statements $1\ind 3|24$ and
  $2\ind 4|13$.  Consequently, the following relations must hold
  \[
    f_{13} = p_{13}p_{\emptyset}-p_{1}p_{3},\quad f_{1234} =
    p_{1234}p_{13} - p_{123}p_{134}.
  \]
  These binomials are independent and show that $\dim \VpwL \le 6$.
  The following natural formulas allow us to express the variables
  $p_{T}$ with $T\notin \overline{C(G)}$ by the remaining ones:
  \[
    p_{13} = \frac{p_{1}p_{3}}{p_{\emptyset}} =
    c_{\emptyset}c_{1}c_{3}  \quad \quad
    p_{1234} = \frac{p_{123} p_{134}}{p_{13}}  = \frac{p_{123} p_{134} p_\emptyset}{p_{1} p_{3}}.
  \]
\end{ex}

\smallskip



To finish off the section, we show that there is not much hope for
generalizations to unnatural lattices.
If $L \subseteq 2^{[m]}$ is just a distributive lattice, there is
still a poset $P$ such that $L = J(P)$, but its elements need not be
singletons anymore.
%
The straight-forward generalization of Theorem~\ref{thm:localtofactor}
to arbitrary lattice supports cannot hold, as the following example
shows.
\begin{ex}
  Consider one more time the $4$-cycle graph.  We expose a
  distribution that satisfies all global (and hence pairwise) Markov
  statements of~$G$, has lattice support, but does not factor
  according to~$G$.  To find it, consider the first of the quartics
  \eqref{eq:quartics} that minimally generated $I_{G}$ but do not lie
  in $I_{\gl(G)} = I_{\pw(G)}$:
  \begin{equation}\label{eq:oneQuartic}
    p_\emptyset p_{34}p_{124}p_{123}-p_4p_3p_{12}p_{1234}.
  \end{equation}
  It's support 
  \[    
    L = \set{\emptyset, 12, 3, 4, 34, 123, 124, 1234}
  \]
  is an unnatural distributive lattice, depicted in
  Figure~\ref{fig:unnatural}.
  \begin{figure}[t]
    \centering
    \begin{tikzpicture}[scale=1, every node/.style={}]
      \node (0) at (0, 0) {$\emptyset$};
      \node (12) at (-1, 1) {$\{1, 2\}$};
      \node (3) at (0, 1) {$\{3\}$};
      \node (4) at (1, 1) {$\{4\}$};
      \node (34) at (1.2, 2) {$\{3, 4\}$};
      \node (123) at (-1.6, 3) {$\{1, 2, 3\}$};
      \node (124) at (-.2, 3) {$\{1, 2, 4\}$};
      \node (1234) at (0, 4) {$\{1, 2, 3, 4\}$};

      \draw (0) -- (12);
      \draw (0) -- (3);
      \draw (0) -- (4);
      \draw (12) -- (123);
      \draw (12) -- (124);
      \draw (3) -- (34);
      \draw (4) -- (34);
      \draw (123) -- (1234);
      \draw (124) -- (1234);
      \draw (34) -- (1234);
    \end{tikzpicture}
    \caption{An unnatural lattice support of a $4$-cycle quartic.}
    \label{fig:unnatural}
  \end{figure}
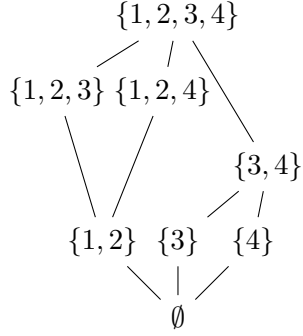
  The poset $P$ such that $L = J(P)$ is an antichain with the three
  elements $\{12\}$, $3$, and $4$ which are all incomparable in~$P$.
  Any distribution with
  support $L$ satisfies the global Markov conditions because the
  quadrics in \eqref{eq:quadrics} all reduce to zero on this support.
  We seek a distribution $p$ with support $L$ which does not satisfy
  the quartic and hence cannot factor. 
  Therefore any $p$ with support $L$ that does not satisfy
  \eqref{eq:oneQuartic} will do, for example, put
  $p_{\emptyset} = 1/2$ and $p_{S} = 1/14$ for $S \in L$.
\end{ex}

\section{Vanishing Ideals of Lattice Supported Graphical Models}\label{sec:Hibiideals}

In combinatorial commutative algebra, toric ideals are used to encode
and work with various combinatorial structures, including distributive
lattices.  In this section, we introduce the \emph{lattice graphical
  model ideals} associated to the pair of a lattice and a compatible
graph.  This family of ideals includes both the vanishing ideals of
graphical models and the Hibi ideals associated to distributive
lattices.

Let $L = J(P)$ be a natural distributive lattice, and $G$ a graph that
contains the minimal graph of~$L$.  Then the Markov statements of $G$
are compatible with the lattice structure~$L$.  Now let
$\kk[p] := \kk[p_S : S \in L]$ be the polynomial ring over a field
$\kk$ with one indeterminate per element in the lattice.  The
homorphisms from Section~\ref{sec:ideals}, corresponding to the
standard and alternative parametrization can be considered also from
this $\kk[p]$:
\begin{equation*} 
  \phi_G\colon \kk[p] \rightarrow \kk[a],\;  p_T \mapsto 
  \prod_{S \in\calc(G)^{\max}}
  a^S_{S \cap T}, \qquad 
  \psi_G\colon \kk[p] \rightarrow \kk[c],\;  p_T \mapsto 
  \prod_{S \in\calc(G):  S \subseteq T}
  c_S.
\end{equation*}
\begin{defn}
  The \emph{lattice graphical model ideal} for $L$ and $G$ is
  \[
    I_{L,G}  :=  \ker \phi_G  =  \ker \psi_G.
  \]
\end{defn}
In terms of commutative algebra, $I_{L,G}$ is the elimination ideal of
$I_{G}$ for which all $p_{T}$ with $T\notin L$ have been eliminated.
If $L = 2^{[m]}$ is the Boolean lattice, the lattice graphical model
ideal equals the graphical model ideal; that is, $I_{L,G} = I_G$.

For a given natural lattice $L$, the smallest graph that is compatible
with support $L$ is the minimal graph of~$L$.  The lattice graphical
model ideal for that graph is a familiar ideal from combinatorial
commutative algebra, the Hibi ideal.
\begin{defn}
  The \emph{Hibi ideal} associated to a lattice $L$ is the binomial
  ideal
  \[
    I_L  =  \langle  p_S p_T - p_{S \cap T} p_{S \cup T}  :  S,T \in L  \rangle.
  \]
\end{defn}

These ideals have been studied for a long time, starting with the
following result from~\cite{Hibi1987}.
\begin{thm}
  The Hibi ideal $I_L$ is prime if and only if $L$ is a distributive
  lattice.  In this case, the binomials
  \[
    \{ \underline{p_S p_T} - p_{S \cap T} p_{S \cup T} : S,T \in L \}
  \]
  form a Gr\"obner basis for $I_L$ with respect to a reverse
  lexicographic term order that makes the underlined terms the leading
  terms.
\end{thm}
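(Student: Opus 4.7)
The plan is to realize $I_L$ as the kernel of an explicit toric map and to prove both directions of the equivalence this way. Let $L = J(P)$ in the distributive case, and define the monomial homomorphism
\[
\pi\colon \kk[p_S : S \in L] \longrightarrow \kk[t_x : x \in P], \qquad p_S \longmapsto \prod_{x \in S} t_x.
\]
For every $S, T \in L$ and every $x \in P$ one has $\mathbf{1}_{x \in S} + \mathbf{1}_{x \in T} = \mathbf{1}_{x \in S \cap T} + \mathbf{1}_{x \in S \cup T}$, so each defining binomial $p_S p_T - p_{S \cap T} p_{S \cup T}$ lies in $\ker \pi$; hence $I_L \subseteq \ker \pi$. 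Since $\ker \pi$ is the kernel of a map to an integral domain, it is prime, so establishing equality $I_L = \ker \pi$ gives primeness immediately. This is the main payload of the ``if'' direction.

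To get the reverse containment and the Gröbner basis simultaneously, I would pick a reverse lexicographic order $\succ$ induced from any linear extension of the containment order on $L$, arranged so that whenever $S, T$ are incomparable the product $p_S p_T$ beats $p_{S \cap T} p_{S \cup T}$ (both factors on the right are strictly between $S \cap T$ and $S \cup T$ in containment, so this is automatic once comparability is respected). Comparable pairs give zero binomials, so the generating set reduces to one binomial per incomparable pair. The step to verify is Buchberger's criterion: for two incomparable pairs $(S,T)$ and $(U,V)$, I would compute the S-polynomial and rewrite it by repeatedly applying the straightening relation $p_X p_Y \to p_{X \cap Y} p_{X \cup Y}$. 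The distributive identities $(X \cap Y) \cap Z = X \cap (Y \cap Z)$ and the modular/distributive interaction of meets and joins make every intermediate pair stay inside $L$ and guarantee the two reduction chains meet. Once all S-pairs reduce to zero, the initial ideal $\mathrm{in}_\succ(I_L)$ is generated by the squarefree monomials $p_S p_T$ over incomparable pairs; counting monomials not divisible by any such $p_S p_T$ (the standard monomials are products $p_{S_1} \cdots p_{S_k}$ with $S_1 \subseteq \cdots \subseteq S_k$, i.e., multichains in $L$) matches the Hilbert function of $\mathrm{im}\,\pi$, giving $I_L = \ker \pi$.

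For the ``only if'' direction, I would invoke Birkhoff's theorem: if $L$ is not distributive, then $L$ contains a sublattice isomorphic to either $M_3$ or $N_5$. In the $M_3$ case with atoms $a, b, c$ meeting at $\hat 0$ and joining to $\hat 1$, the three relations $p_a p_b - p_{\hat 0} p_{\hat 1}$, $p_a p_c - p_{\hat 0} p_{\hat 1}$, $p_b p_c - p_{\hat 0} p_{\hat 1}$ give
\[
p_a(p_b - p_c) = (p_a p_b - p_{\hat 0} p_{\hat 1}) - (p_a p_c - p_{\hat 0} p_{\hat 1}) \in I_L,
\]
while neither $p_a$ nor $p_b - p_c$ is in $I_L$ (they are homogeneous of degree one, but all generators of $I_L$ have degree two), so $I_L$ is not prime. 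An analogous three-term cancellation in $N_5$ produces the same obstruction, completing the proof.

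The main obstacle is the Buchberger check: tracking the S-polynomial reduction requires iteratively applying distributivity to pairs arising from overlaps of the two leading monomials, and this is exactly where the hypothesis that $L$ is distributive must be used in an essential, combinatorial way. Everything else — the toric setup, the primeness deduction, and the $M_3/N_5$ obstruction — is formal once that calculation is in hand.
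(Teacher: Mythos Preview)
The paper does not give a proof of this theorem; it is quoted as a classical result of Hibi (1987) and then used without argument. So there is no in-paper proof to compare against. Your outline follows the standard route (toric realization plus straightening, and the $M_3/N_5$ obstruction for the converse) and is essentially the textbook argument, with one genuine slip that you should fix.

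Your map $\pi$ is missing a homogenizing variable. The bottom element $\hat 0$ of $L$ corresponds to the empty order ideal of $P$, so $\pi(p_{\hat 0})=\prod_{x\in\emptyset}t_x=1$ and hence $p_{\hat 0}-1\in\ker\pi$. Since $I_L$ is generated in degree two, $p_{\hat 0}-1\notin I_L$, so $I_L\subsetneq\ker\pi$ and the equality you want fails. More globally, without the extra variable the image of $\pi$ is not graded by total degree in the $p$'s, so your Hilbert-function comparison does not make sense as stated. The remedy is exactly what the paper records in its proposition on the Hibi parametrization: use
\[
p_S\longmapsto t\prod_{x\in S}t_x,
\]
with $\deg t=1$ and $\deg t_x=0$. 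Then the map is graded, $\ker\pi$ is a homogeneous prime, and your multichain count (standard monomials are chains $S_1\subseteq\cdots\subseteq S_d$ in $L$, matching the monomials $t^d\prod t_x^{e_x}$ in the image) goes through verbatim to give $I_L=\ker\pi$.

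The rest of your sketch is sound. The $M_3$ argument is correct as written, and the $N_5$ case works the same way via $p_c(p_a-p_b)\in I_L$. Your identification of the Buchberger reduction as the step where distributivity is genuinely used is accurate; this is the ``straightening law'' computation in Hibi's original paper.
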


If $L$ is distributive, since $I_L$ is a prime ideal that is generated
by monomial differences, it is a toric ideal.  This means that it
equals the kernel of a monomial homomorphism that can be described
using the poset $P$ such that $L = J(P)$ as in
Proposition~\ref{thm:BirkhoffRep}.
\begin{prop}\label{prop:Hibi}
  Let $P$ be a poset on $[m]$ and $L = J(P)$ the natural distributive
  lattice of order ideals.  Then $I_{L} = \ker \phi_{L}$ is the kernel
  of the $\kk$-algebra homomorphism
  \[
    \phi_L\colon \kk[p] \rightarrow \kk[t,b] = \kk[t, b_i : i \in
    [m]], \qquad p_{S}\mapsto t\prod_{i\in S}b_{i}.
  \]
\end{prop}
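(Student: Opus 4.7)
The plan is the standard two-inclusion argument, leveraging the Gröbner basis from the preceding theorem.

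First I would verify $I_L \subseteq \ker \phi_L$ by checking each generator. Writing $\mathbf{1}_U \in \{0,1\}^m$ for the indicator vector of $U \subseteq [m]$, the identity $\mathbf{1}_S + \mathbf{1}_T = \mathbf{1}_{S \cap T} + \mathbf{1}_{S \cup T}$ holds for every pair of subsets, since each $i \in [m]$ contributes the same total on both sides. Therefore
\[
\phi_L(p_S p_T) = t^2 \prod_i b_i^{\mathbf{1}_S(i) + \mathbf{1}_T(i)} = t^2 \prod_i b_i^{\mathbf{1}_{S \cap T}(i) + \mathbf{1}_{S \cup T}(i)} = \phi_L(p_{S \cap T} p_{S \cup T}),
\]
so every Hibi binomial lies in $\ker \phi_L$.

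For the reverse inclusion I would invoke the Gröbner basis fact just quoted. The leading terms $p_S p_T$ with $S, T$ incomparable generate the initial ideal, so the standard monomials of $\kk[p]/I_L$ are exactly the products $p_{S_1} p_{S_2} \cdots p_{S_k}$ where $S_1 \subseteq S_2 \subseteq \cdots \subseteq S_k$ is a multichain in $L$ (repetitions allowed): a monomial is standard precisely when no two sets in its support are incomparable. By the Gröbner basis theorem these standard monomials form a $\kk$-vector space basis of $\kk[p]/I_L$. It therefore suffices to show that $\phi_L$ is injective on the $\kk$-span of the multichain monomials, for then $\ker \phi_L$ intersects this span only in $0$ and, combined with $I_L \subseteq \ker \phi_L$, forces equality.

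The injectivity is a direct combinatorial check. The image of a multichain monomial is
\[
\phi_L(p_{S_1} p_{S_2} \cdots p_{S_k}) = t^k \prod_{i \in [m]} b_i^{e_i}, \qquad e_i := \#\{\, j : i \in S_j \,\}.
\]
From the exponent of $t$ we recover $k$, and from the $b_i$-exponents we recover each $e_i$; since $S_1 \subseteq \cdots \subseteq S_k$ is a chain, the set of $j$ with $i \in S_j$ is forced to be the suffix $\{k - e_i + 1, \ldots, k\}$, so the whole multichain is reconstructed from the tuple $(k, e_1, \dots, e_m)$. Distinct multichain monomials thus land on distinct monomials in $\kk[t, b]$, giving the desired injectivity.

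There is little genuine obstacle: the preceding Gröbner basis theorem does almost all of the combinatorial work. The only new ingredient is the reconstruction of a multichain from the exponent vector of its image, which is the elementary suffix observation above. Naturality of $L$ is merely cosmetic here; the same argument works for any finite distributive lattice once the $b_i$ are indexed by the join-irreducibles of $L$ rather than the ground set $[m]$.
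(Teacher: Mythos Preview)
Your argument is correct. The inclusion $I_L \subseteq \ker\phi_L$ is immediate from the indicator-vector identity you state, and the reverse inclusion via standard monomials is clean: the Gr\"obner basis theorem identifies the standard monomials as multichain monomials, and your suffix reconstruction shows that $\phi_L$ separates them, so nothing outside $I_L$ can lie in the kernel.

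The paper does not actually supply a proof of this proposition. It is stated as a known fact, with only the preceding sentence as motivation: since $I_L$ is prime (by Hibi's theorem) and generated by monomial differences, it is a toric ideal and hence the kernel of \emph{some} monomial map; the proposition then names that map. Turning that remark into a full proof would require an additional step---for instance, checking that $\ker\phi_L$ is prime of the same dimension as $I_L$, or that the generators of $I_L$ already cut out an irreducible variety inside $V(\ker\phi_L)$. Your route through the Gr\"obner basis and explicit injectivity on standard monomials is more self-contained and avoids any appeal to primeness of $\ker\phi_L$; it also makes transparent, as you note, that naturality plays no essential role beyond fixing the index set for the $b_i$.
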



\begin{thm}\label{thm:hibi=latticeminimal}
  Let $L = J(P)$ be a natural distributive lattice.  Let $G$ be the
  minimal graph of~$P$.  Then the lattice graphical model ideal equals
  the Hibi ideal; that is, $I_{L,G} = I_L$.
\end{thm}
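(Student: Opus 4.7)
The plan is to work with the alternative parametrization $\psi_G$ from \eqref{eq:allCliqueParam}, whose kernel is $I_{L,G}$ by definition, and to show that when restricted to $L$ it factors as $\psi_G = \sigma \circ \phi_L$ through the Hibi parametrization $\phi_L$ of Proposition~\ref{prop:Hibi} via an \emph{injective} $\kk$-algebra homomorphism $\sigma$. Injectivity of $\sigma$ then forces $\ker \psi_G = \ker \phi_L$, i.e.\ $I_{L,G} = I_L$.

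The first preparatory step is to describe the cliques of the minimal graph~$G$. Since the edges of $G$ are the cover relations of $P$, the graph is triangle-free: three elements with pairwise cover relations would produce, after relabeling, a chain $a > b > c$ in which $b$ lies strictly between $a$ and $c$, contradicting $a \gtrdot c$. Hence $\calc(G)$ consists of the empty set, the singletons $\{i\}$, and the cover edges $\{i,j\}$ with $i \gtrdot j$.

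The decisive step uses the order-ideal property of $L = J(P)$. For any $T \in L$ and any cover $i \gtrdot j$, we have $\{i,j\} \subseteq T$ if and only if $i \in T$, since $j < i$ forces $j \in T$ whenever $i \in T$. Plugging this into the definition of $\psi_G$, for $T \in L$,
\[
\psi_G(p_T) \;=\; c_\emptyset \prod_{i \in T} c_i \prod_{\substack{i \gtrdot j \\ i \in T}} c_{ij} \;=\; c_\emptyset \prod_{i \in T} \tilde c_i, \qquad \tilde c_i \;:=\; c_i \prod_{j:\, i \gtrdot j} c_{ij}.
\]
Comparing with $\phi_L(p_T) = t \prod_{i \in T} b_i$ from Proposition~\ref{prop:Hibi}, this exhibits $\psi_G = \sigma \circ \phi_L$, where $\sigma \colon \kk[t, b_1, \dots, b_m] \to \kk[c]$ is the substitution $t \mapsto c_\emptyset$ and $b_i \mapsto \tilde c_i$.

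It remains to verify that $\sigma$ is injective. The $m+1$ monomials $c_\emptyset, \tilde c_1, \dots, \tilde c_m$ use pairwise disjoint subsets of the variables $\{c_S : S \in \calc(G)\}$: $c_\emptyset$ appears only in $\sigma(t)$, each vertex variable $c_i$ appears only in $\tilde c_i$, and each edge variable $c_{ij}$ (with $i \gtrdot j$) appears only in $\tilde c_i$. Algebraically independent image monomials make $\sigma$ injective, so $\ker \psi_G = \ker(\sigma \circ \phi_L) = \ker \phi_L = I_L$, and the theorem follows. There is no real obstacle; the only creative step is spotting the factorization $\psi_G = \sigma \circ \phi_L$, after which the triangle-freeness of the Hasse diagram and the algebraic independence check are both elementary.
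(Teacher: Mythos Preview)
Your proof is correct and follows essentially the same route as the paper: both identify the cliques of the minimal graph as $\emptyset$, singletons, and cover edges, then absorb each edge variable $c_{ij}$ (with $i \gtrdot j$) into the top vertex via the substitution $t = c_\emptyset$, $b_i = c_i \prod_{j:\, i \gtrdot j} c_{ij}$, using the order-ideal property to see that $c_{ij}$ occurs in $\psi_G(p_T)$ exactly when $c_i$ does. Your version is in fact a bit more careful than the paper's, since you explicitly frame the comparison as $\psi_G = \sigma \circ \phi_L$ and verify that $\sigma$ is injective (the image monomials use disjoint variable sets), which cleanly justifies the equality of kernels that the paper asserts without further comment.
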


\begin{proof}
  We manipulate the the alternative parametrization $\psi_{L,G}$ so
  that it yields the parametrization for $I_L$.  Since $G$ is the
  minimal graph of~$P$, all cliques have either $0$, $1$, or $2$
  elements.  In the alternative parametrization, we gather and rename
  products of $c_{i}$ parameters to yield $t$ and $b_{i}$ parameters.
  Specifically, we make the following assignments:
  \[
    t = c_\emptyset, \quad \text{ and } \quad
    b_i = c_i \prod_{j: i\gtrdot j} c_{ij}.
  \]
  The product in the second assignment is empty if $i$ is minimal in
  $P$, so then $b_{i} = c_{i}$.  In the alternative parametrization,
  if $S \in L$, and $i \in S$, then any $j $ such that $i \gtrdot j$
  is also in $S$.  Thus, if $p_S$ contains $c_i$ it also contains all
  of $\prod_{j: i\gtrdot j} c_{ij}$.  This shows the parametrizations
  have the same kernel, and hence $I_{L,G} = I_L$.
\end{proof}

Theorem~\ref{thm:hibi=latticeminimal} can be strengthened by putting
in more edges that correspond to any comparibility in~$P$.  The upper
bound is the following graph which contains the minimal graph of~$P$.
\begin{defn}
  Let $P = ([m], \leq)$ be a poset.  The \emph{comparability graph} of
  $P$ is the graph $\mathrm{Comp}(G)$ with vertex set $[m]$ and with
  an edge $(i,j)$ if either $i < j$ or $i > j$ in $P$.
\end{defn}
\begin{thm}
  Let $L = J(P)$ be a natural distributive lattice.  Let $G$ be any
  graph such that $P \subseteq G \subseteq \mathrm{Comp}(P)$.  Then
  the lattice graphical model ideal equals the Hibi ideal; that is,
  $I_{L,G} = I_L$.
\end{thm}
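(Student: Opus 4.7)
The plan is to prove the equality $I_{L,G}=I_{L}$ by showing both containments, working throughout with the alternative parametrization $\psi_{G}$ from \eqref{eq:allCliqueParam}.

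For $I_{L,G}\subseteq I_{L}$, I would argue by monotonicity in the graph. Let $G_{0}$ denote the minimal graph of $L$; by hypothesis $G_{0}\subseteq G$, hence $\calc(G_{0})\subseteq \calc(G)$. The parametrization $\psi_{G_{0}}$ is recovered from $\psi_{G}$ by specializing $c_{S}\mapsto 1$ for every extra clique $S\in \calc(G)\setminus \calc(G_{0})$. Since $\ker \psi_{G}$ consists of polynomials in the $p$-variables only, this specialization does not affect them, so $\ker \psi_{G}\subseteq \ker \psi_{G_{0}} = I_{L}$ by Theorem~\ref{thm:hibi=latticeminimal}.

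For the reverse containment $I_{L}\subseteq I_{L,G}$, I would verify directly that each Hibi generator $p_{S}p_{T}-p_{S\cap T}p_{S\cup T}$ with $S,T\in L$ lies in $\ker \psi_{G}$. Expanding the definition of $\psi_G$, this amounts to showing that for every clique $C\in \calc(G)$, the number of the two sets $S,T$ containing $C$ equals the number of the two sets $S\cap T, S\cup T$ containing $C$. The key observation, carrying the whole content of the theorem, is that the hypothesis $G\subseteq \mathrm{Comp}(P)$ forces every clique $C$ of $G$ to be a chain in $P$, since the vertices of $C$ are pairwise comparable in $P$. Once this is established, membership of the chain $C$ in an order ideal $U\in L$ is controlled by its maximum element: $C\subseteq U$ if and only if $\max C \in U$. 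A short case analysis on whether $\max C$ lies in $S$, in $T$, in both, or in neither, then verifies the desired multiplicity identity.

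The main obstacle is precisely the chain property of cliques; the two containments themselves reduce to bookkeeping once that is in hand. Observe how the two hypotheses on $G$ divide the work: the lower bound $G_{0}\subseteq G$ is needed only to invoke Theorem~\ref{thm:hibi=latticeminimal}, while the upper bound $G\subseteq \mathrm{Comp}(P)$ is exactly what prevents extra relations from appearing in $\ker \psi_G$, because cliques of $G$ are then chains and behave well under intersection and union of order ideals.
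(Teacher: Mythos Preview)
Your proof is correct. Both you and the paper hinge on the same combinatorial fact---that every clique of a subgraph of $\mathrm{Comp}(P)$ is a chain in $P$ and hence its membership in an order ideal is governed by its maximum element---and both use the monotonicity $G_{1}\subseteq G_{2}\Rightarrow I_{L,G_{2}}\subseteq I_{L,G_{1}}$.

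The organization differs slightly. The paper uses monotonicity as a sandwich to reduce to the single case $G=\mathrm{Comp}(P)$, and then establishes $I_{L,\mathrm{Comp}(P)}=I_{L}$ by regrouping the $c$-parameters: setting $t=c_{\emptyset}$ and $b_{i}=\prod_{S\in\calc(G):\max S=i}c_{S}$ turns $\psi_{G}$ into the Hibi parametrization $\phi_{L}$, so the kernels coincide. You instead split the two containments, getting $I_{L,G}\subseteq I_{L}$ from monotonicity against the minimal graph and $I_{L}\subseteq I_{L,G}$ by a direct exponent count on the Hibi generators. Your generator check is arguably more elementary---it avoids arguing that the regrouping map is injective---while the paper's version is more structural, exhibiting why the two parametrizations are literally the same monomial map after a change of coordinates. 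Either way, the content is the observation about $\max C$, and your case analysis is exactly what drives the paper's regrouping.
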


\begin{proof}
  We follow the proof of Theorem~\ref{thm:hibi=latticeminimal} and
  also use that result.  It suffices to consider the case
  $G = {\rm Comp}(P)$ because the result is true for the minimal graph
  $I_{L,H} \subseteq I_{L,G}$ if $G \subseteq H$.

  To show the result, we make a similar connection between the
  alternate parametrization and the Hibi parametrization.  A key
  property of the comparability graph is that every clique
  $S \in \mathcal{C}(\mathrm{Comp}(P))$, there is an $i \in S$ such
  that for all $j \in S$, $i > j$.  We denote it $\max (S)$ and let
  $\mathcal{C}_{i} = \set{S\in \mathcal{C}(\mathrm{Comp}(P)) : i =
    \max(S)}$.
  Now let
  \[
    t = c_\emptyset \quad \text{ and } \quad b_i = \prod_{S \in
      \calc_{i} } c_S.
  \]
  Since for any $i \in T$ all elements in
  $\prod_{S \in \calc_{i}} c_S $ appear in $p_T$ the two
  parametrizations are equal.
\end{proof}

\section*{Acknowledgments}
Thomas Kahle was supported by the Deutsche Forschungsgemeinschaft
under Grant No.\ 314838170, GRK~2297 ``MathCoRe''.  This material is
based upon work supported by the National Science Foundation under
Grant No.\ DMS-1929284 while Seth Sullivant was in residence at the
Institute for Computational and Experimental Research in Mathematics
in Providence, RI, during the Theory, Methods, and Applications of
Quantitative Phylogenomics semester program.  This work has been
initiated at Mathematisches Forschungsinstitut Oberwolfach.  We thank
Kaie Kubjas for early discussions on this subject.


\bibliographystyle{amsplain}
\bibliography{lattice.bib}

\end{document}